\documentclass[12pt]{amsart}
 \usepackage{amssymb,amsmath,amsfonts,latexsym,setspace}
 \usepackage{bm}
 \usepackage{array,graphics,color}
 \setlength{\textheight}{600pt} \setlength{\textwidth}{475pt}
 \oddsidemargin -0mm \evensidemargin -0mm \topmargin -0pt

 \theoremstyle{plain}
 \newtheorem{propn}{Proposition}[section]
 \newtheorem{thm}[propn]{Theorem}
 \newtheorem{lemma}[propn]{Lemma}
 
 \newtheorem*{thm*}{Theorem}
 \theoremstyle{definition}

 \theoremstyle{remark}
 \newtheorem{rem}{Remark}
 



  \begin{document}

 \title{Trace Formula For Two Variables}

 \author[Chattopadhyay] {Arup Chattopadhyay}
 \address{
 (A. Chattopadhyay) Indian Statistical Institute \\ Statistics and
 Mathematics Unit \\ 8th Mile, Mysore Road \\ Bangalore \\ 560059 \\
 India.}

 \email{arup@isibang.ac.in, 2003arupchattopadhyay@gmail.com}

\author[Sinha]{Kalyan B. Sinha}

\address{
 (K. B. Sinha) J.N.Centre for Advanced Scientific Research\\ and Indian Institute of Science,\\ 
 Bangalore\\ India.}

 \email{kbs@jncasr.ac.in}

\subjclass[2010]{ 47A13, 47A55, 47A56}
 \keywords{Trace formula, Spectral integral, Stokes formula, Multiple spectral integral}

\begin{abstract}
A natural generalization of Krein's theorem to a pair of commuting tuples 
$\left(H_1^0,H_2^0\right)$ and $\left(H_1,H_2\right)$ of bounded self-adjoint
operators in a separable Hilbert space $\mathcal{H}$ with
$H_j-H_j^0 = V_j\in \mathcal{B}_2(\mathcal{H})$(set of
all Hilbert-Schmidt operators on $\mathcal{H}$) for $j=1,2,$ 
leads to a Stokes-like formula under trace. A major ingredient in the proof is the finite-dimensional 
approximation result for commuting self-adjoint n-tuples of operators,
a generalization of Weyl-von Neumann-Berg's theorem.
\end{abstract}

 \maketitle
\section{Introduction}\label{sec: intro}

In the following, $\mathcal{H}$ will denote the separable Hilbert space we work in;
$\mathcal{B}(\mathcal{H})$, $\mathcal{B}_1(\mathcal{H})$,
$\mathcal{B}_2(\mathcal{H})$, $\mathcal{B}_p(\mathcal{H})$ the set of bounded, trace class, 
Hilbert-Schmidt class and Schatten-p class operators in $\mathcal{H}$ respectively with  $\|.\|$, $\|.\|_1$, 
$\|.\|_2$, $\|.\|_p$ as the associated norms. Let $H$ be a self-adjoint operator in $\mathcal{H}$ 
with $\sigma(H)$ as 
the spectra and $E_{H}(\lambda)$ as the spectral family and let $\textup{Tr} A$ 
be the trace 
of a trace class operator $A$. Furthermore, $C(X)$ will be the set of all continuous functions 
on a compact
set $X$ and $L^p(Y,dx)$ ($1\leq p\leq \infty$) be the standard Lebesgue spaces, where $Y=[a,b]^2$ or $[a,b]^4$
for the real interval $[a,b]$ and $dx$ is the relevant Lebesgue measure.

Let $H$ and $H_0$ be two possibly unbounded self-adjoint operators in a separable 
Hilbert space 
$\mathcal{H}$ such that $V=H-H_0 \in \mathcal{B}_1(\mathcal{H})$. 
Then Krein proved that 
there exists a unique real-valued $L^1(\mathbb{R})$- function $\xi$ with support in the interval 
$[a,b]$ such that 
\begin{equation}\label{eq: kreinintro}
 \textup{Tr}\left[\phi\left(H\right)-\phi\left(H_0\right)\right] = \int_{a}^{b} \phi^{\prime}
 (\lambda)\xi(\lambda)d\lambda,
\end{equation}
for a large class of functions $\phi$ (where $a=min\{\inf \sigma(H), \inf \sigma(H_0)\}$ 
and $b=max\{\sup \sigma(H),$ $ \sup \sigma(H_0)\}$). The function 
$\xi$ is known as 
Krein's spectral shift function \color{black} and the relation~\eqref{eq: kreinintro}~
is called Krein's trace formula. The original proof of Krein \cite{krein} uses analytic function theory.
But in 1985, Voiculescu 
approached the trace formula ~\eqref{eq: kreinintro}~ from a different direction. If $H$ and 
$H_0$ are bounded, then Voiculescu \cite{Voiculescu} proved that
\begin{equation}
\textup{Tr}\left[p\left(H\right)-p\left(H_0\right)\right] = \lim_{n\longrightarrow \infty} 
\textup{Tr}\left[p\left(H_n\right)-p\left(H_{0,n}\right)\right]\color{black},
\end{equation}
where $p$ is a polynomial and $H_n$, $H_{0,n}$ are finite-dimensional approximation of $H$ and $H_0$
respectively (constructed by adapting Weyl-von Neumann theorem). Then one constructs the spectral shift 
function in the finite dimensional case and finally the
formula is extended to the infinite dimensional case. Later Sinha
and Mohapatra (\cite{sinhamoha},\cite{sinmoha}) used a similar method to get the same result for the unbounded 
self-adjoint case. 
If on the other hand  $H-H_0 = V\in \mathcal{B}_2(\mathcal{H})$, the difference 
$\phi(H) - \phi(H_0)$
is no longer of trace-class and one has to consider instead
$\phi(H) - \phi(H_0) - D^{(1)}\phi(H_0)(V),$
where $D^{(1)}\phi(H_0)(V)$ denotes the Fr$\acute{e}$chet derivative of $\phi$ at $H_0$ acting on
$V$ (see \cite{bhatia}) and 
find a trace formula for the above expression. 
Under the above hypothesis, Koplienko's formula \cite{kop} asserts that there exists a unique function 
$\eta \in L^1(\mathbb{R})$ such that
\begin{equation}\label{eq: intkopeq}
 \textup{Tr}\{\phi(H) - \phi(H_0) - D^{(1)}\phi(H_0)(V)\} = \int_{-\infty}^{\infty} \phi''(\lambda)
 \eta(\lambda)d\lambda
\end{equation}
for rational functions $\phi$ with poles off $\mathbb{R}$.
Gesztesy, Pushnitski and Simon \cite{gespusbir} gave an alternative proof of the formula \eqref{eq: intkopeq} for 
the bounded case and Dykema and Skripka \cite{dyskip} and earlier Boyadzhiev \cite{boya}
obtained the formula \eqref{eq: intkopeq} in the semi-finite von Neumann algebra setting. The present
authors used the finite-dimensional approximation idea to prove the Koplienko formula \cite{chattosinha1}
as well as the third-order trace formula for both bounded and unbounded cases \cite{chattosinha2}. More recently, 
Potapov, Skripka and Sukochev \cite{potaskipsuko} has proven 
the trace-formula for all orders, obtaining a kind of Taylor's theorem
under trace. In fact they have shown in \cite{potaskipsuko},
the existence of $\eta_n\in L^1(\mathbb{R})$ for $n\in \mathbb{N}$
such that 
$$\textup{Tr}\left(\phi(H_0+V)-\sum_{k=0}^{n-1}\frac{1}{k!}
D^{(k)}\phi(H_0)(\underbrace{V,V,\ldots,V}\limits_{k-\text{times}})\right)
=\int_{\mathbb{R}}\phi^{(n)}(\lambda)\eta_n(\lambda)d\lambda,$$
for every sufficiently smooth function $\phi,$ where $H_0$ is a self-adjoint operator 
defined on a Hilbert space $\mathcal{H},$ $V$ is a self-adjoint operator such that
$V\in \mathcal{B}_n(\mathcal{H}),$ $D^{(k)}\phi(H_0)(\underbrace{V,V,\ldots,V}\limits_{k-\text{times}})$
denotes the $k$-th order Fr$\acute{e}$chet derivative of $\phi$ at $H_0$ acting on
$(\underbrace{V,V,\ldots,V}\limits_{k-\text{times}})$ (see \cite{bhatia}) and 
$\phi^{(n)}$ denotes the $n$-th order derivative of the function $\phi$.
It is natural to ask similar questions for a pair of commuting self-adjoint
n-tuples, particularly an appropriate adaptation of Krein's formula \eqref{eq: kreinintro} to
two and higher dimensions. Here our aim is to formulate a relevant question for a pair of
commuting bounded self-adjoint tuples and use the idea of finite dimensional approximation 
to obtain Stokes-like formula under trace. In this context, it should be mentioned that recently
Skripka \cite{skripka} has studied a related problem for commuting contractions.

The Section 2 addresses the issue of finite-dimensional approximation for n-tuples of commuting
bounded self-adjoint operators by adapting Berg's \cite{davidson} extension of Weyl-von Neumann theorem.
Section 3 deals with the spectral integrals of operator functions and reducing the problem into a finite
dimensional case. Finally in Section 4 we have established Stokes-like formula for a class of operator functions
under trace.

\section{Approximation Results}\label{sec: second}
The main result in this section (Theorem \ref{th: WBG}) is an adaptation from the proof 
of Weyl-von Neumann-Berg theorem \cite{davidson} for proving a finite-dimensional approximation in suitable 
Schatten-von Neumann $\mathcal{B}_p$-ideal norm for commuting n-tuples
of bounded self-adjoint operators.
First we need a known simple lemma, the proof of which is given for
the sake of completeness.

\begin{lemma}\label{lmma1}
Let $A\in \mathcal{B}(\mathcal{H})$ be such that $0\leq A\leq I$. Now consider the spectral projections
$E_k = E_{A}\left(\bigcup \limits_{j=1}^{2^{k-1}}\left(2^{-k}(2j-1),2^{-k}(2j)\right]\right)$ for $k\geq 1$.
Then
\begin{equation}\label{berg1}
A = \sum_{k=1}^{\infty}2^{-k}E_k,
\end{equation}
where the right hand side of \eqref{berg1} converges in operator norm.
\end{lemma}
\begin{proof}
We want to show that
\begin{align*}
A 
&= \sum_{k=1}^{\infty}2^{-k}E_k = \sum_{k=1}^{\infty}2^{-k}E_A\left(\bigcup_{j=1}^{2^{k-1}}
\left(2^{-k}(2j-1),2^{-k}(2j)\right]\right)\\
&= \sum_{k=1}^{\infty}2^{-k} \sum_{j=1}^{2^{k-1}} E_A\left(2^{-k}(2j-1),2^{-k}(2j)\right],
\end{align*}
since $\left(2^{-k}(2i-1),2^{-k}(2i)\right] \bigcap \left(2^{-k}(2j-1),2^{-k}(2j)\right] = \emptyset$
for $i \neq j$ and $1\leq i,j \leq 2^{k-1}$. Let
\begin{equation}\label{ind1}
S_N \equiv \sum_{k=1}^{N}2^{-k}E_k = \sum_{k=1}^{N}2^{-k} \sum_{j=1}^{2^{k-1}} E_A
\left(2^{-k}(2j-1),2^{-k}(2j)\right].
\end{equation}
Next  by applying principle of mathematical induction on $N,$ we prove that
\begin{equation}\label{ind2}
S_N = \sum_{m=1}^{2^N-1} m2^{-N}E_A\left(2^{-N}m,2^{-N}(m+1)\right].
\end{equation}
For $N=1,$ $S_1 = 2^{-1} E_A\left(2^{-1}1,2^{-1}(2)\right]
= 1.2^{-1}E_A\left(2^{-1}1,2^{-1}(1+1)\right]$ and therefore the equation \eqref{ind2} is true for $N=1$

Next we assume that the equation \eqref{ind2} is true for $N=l$ i.e.
\begin{equation}\label{ind3}
S_l = \sum_{k=1}^{l}2^{-k} \sum_{j=1}^{2^{k-1}} E_A
\left(2^{-k}(2j-1),2^{-k}(2j)\right] = \sum_{m=1}^{2^l-1} m2^{-l}E_A\left(2^{-l}m,2^{-l}(m+1)\right].
\end{equation}
Therefore by using equation \eqref{ind3} and changing summation index appropriately we get
\begin{equation}\label{chap4sum1}
\begin{split}
S_{l+1} =  \sum_{k=1}^{l+1}2^{-k} \sum_{j=1}^{2^{k-1}} E_A\left(2^{-k}(2j-1),2^{-k}(2j)\right]\\
& \hspace{-7.2cm} = \sum_{k=1}^{l}2^{-k} \sum_{j=1}^{2^{k-1}} E_A\left(2^{-k}(2j-1),2^{-k}(2j)\right]
+ 2^{-(l+1)} \sum_{j=1}^{2^{l}} E_A\left(2^{-(l+1)}(2j-1),2^{-(l+1)}(2j)\right]\\ 
& \hspace{-7.2cm} = \sum_{m=1}^{2^l-1} m2^{-l}E_A\left(2^{-l}m,2^{-l}(m+1)\right] 
+ 2^{-(l+1)} \sum_{m=1}^{2^{l}} E_A\left(2^{-(l+1)}(2m-1),2^{-(l+1)}(2m)\right]\\
& \hspace{-7.2cm} = \sum_{m=1}^{2^l-1} m2^{-l}E_A\left(2^{-l}m,2^{-l}(m+1)\right] 
+ 2^{-(l+1)} E_A\left(2^{-(l+1)},2^{-l}\right] \\
& \hspace{-4cm} +  \sum_{m=2}^{2^{l}} 2^{-(l+1)}E_A\left(2^{-(l+1)}(2m-1),2^{-(l+1)}(2m)\right]\\
& \hspace{-7.2cm} = \sum_{m=1}^{2^l-1} m2^{-l}E_A\left(2^{-l}m,2^{-l}(m+1)\right] 
+ 2^{-(l+1)} E_A\left(2^{-(l+1)},2^{-l}\right] \\
& \hspace{-4cm} +  \sum_{m=1}^{2^{l}-1} 2^{-(l+1)}E_A\left(2^{-(l+1)}(2m+1),2^{-(l+1)}(2m+2)\right].
\end{split}
\end{equation}
But on the other hand the first summation in the equation \eqref{chap4sum1} gives us
\begin{equation}\label{chap4sum2}
\begin{split}
\sum_{m=1}^{2^l-1} m2^{-l}E_A\left(2^{-l}m,2^{-l}(m+1)\right] 
= \sum_{m=1}^{2^l-1} 2m 2^{-(l+1)}E_A\left(2^{-(l+1)}2m,2^{-(l+1)}(2m+2)\right]\\
& \hspace{-14.8cm} = \sum_{m=1}^{2^l-1} 2m2^{-(l+1)}E_A\{\left(2^{-(l+1)}2m,2^{-(l+1)}(2m+1)\right]\cup 
\left(2^{-(l+1)}(2m+1),2^{-(l+1)}(2m+2)\right]\}\\
& \hspace{-14.8cm} = \sum_{m=1}^{2^l-1} 2m 2^{-(l+1)}E_A\left(2^{-(l+1)}2m,2^{-(l+1)}(2m+1)\right]\\
& \hspace{-10cm} + \sum_{m=1}^{2^l-1} 2m 2^{-(l+1)}E_A\left(2^{-(l+1)}(2m+1),2^{-(l+1)}(2m+2)\right].\\
\end{split}
\end{equation}
Combining \eqref{chap4sum1}
and \eqref{chap4sum2}, we conclude that 
\begin{equation*}
\begin{split}
 S_{l+1} = 2^{-(l+1)} E_A\left(2^{-(l+1)},2^{-l}\right] +
 \sum_{m=1}^{2^l-1} 2m 2^{-(l+1)}E_A\left(2^{-(l+1)}2m,2^{-(l+1)}(2m+1)\right] 
 \\
& \hspace {-10cm} + \sum_{m=1}^{2^l-1} (2m+1)2^{-(l+1)}E_A\left(2^{-(l+1)}(2m+1),2^{-(l+1)}(2m+2)\right]\\
&  \hspace {-13.2cm} = \sum_{m=1}^{2^{(l+1)}-1} m 2^{-(l+1)}E_A\left(2^{-(l+1)}m,2^{-(l+1)}(m+1)\right].
\end{split}
\end{equation*}
Therefore the equation \eqref{ind2} is true for $N=l+1$, completing the induction. 
Thus for $f\in \mathcal{H}$,
\begin{align*}
 \left(\sum_{k=1}^{\infty} 2^{-k}E_k\right)f
&= \lim_{N\longrightarrow \infty}\left(\sum_{k=1}^{N} 2^{-k}E_k\right)f\\
&\hspace{-2.5cm} = \lim_{N\longrightarrow \infty}\left(\sum_{m=1}^{2^N-1} m.2^{-N}E_A\left(m.2^{-N},(m+1)
.2^{-N}\right]\right)f
= \int \lambda E_A(d\lambda)f,
\end{align*}
by using the definition of spectral integral of $A$ (see \cite{ajs}). Thus
\begin{equation*}
A = \int \lambda E_A\left(d\lambda\right) = \sum_{k=1}^{\infty} 2^{-k}E_k.
\end{equation*}
\end{proof}

A result due to Weyl and von Neumann \cite{kato} proves that for a self-adjoint operator $A$ that given $\epsilon >0$, 
$\exists K \in \mathcal{B}_2(\mathcal{H})$
such that $\|K\|_2 < \epsilon$ and $A+K$ has pure point spectrum. Later Berg extended this to an 
n-tuples of bounded commuting self-adjoint operators
$\left(A_1,A_2,\ldots,A_n\right)$,
which says that given $\epsilon >0$,~ $\exists$ $\{K_j\}_{j=1}^n$ of compact operators such that 
$\|K_j\|<\epsilon$
$\forall j$ and $\{A_j-K_j\}_{j=1}^n$ is a commuting family of bounded self-adjoint operators with 
pure point spectra.
We extend in the next theorem the ideas of the proof of Berg's result as given in \cite{davidson}.
It is worth mentioning that Voiculescu \cite{Voiculescu2} 
had earlier obtained related (though not the same) results.

\begin{thm}\label{th: WBG}
Let $\{A_i\}_{1\leq i \leq n}$ be a commuting family of bounded self-adjoint operators in an 
infinite-dimensional separable Hilbert space $\mathcal{H}$.
Then there exists a sequence $\{P_N\}$ of finite-rank projections such that $\{P_N\} \uparrow I$ as 
$N\longrightarrow \infty$ and such that there exists a
commuting family of bounded self-adjoint operators $\{B_i^{(N)}\}_{1\leq i\leq n}$ with the properties that
for $p\geq n$ and for each $i$ ($1\leq i\leq n$), as $N\longrightarrow \infty$,
\vspace{0.2in}

(i) $P_N B_i^{(N)} P_N = B_i^{(N)} P_N,$ ~~~(ii) $\left\|A_i-B_i^{(N)}\right\|_p \longrightarrow 0$,
~~~(iii) $\left\|[A_i,P_N]\right\|_p \longrightarrow 0$, 
\vspace{0.2in}

(iv) $\left\|P_NA_iP_N - B_i^{(N)}P_N\right\|_p \longrightarrow 0$ and (v) $\{B_i^{(N)}\} \uparrow A_i$.

\end{thm}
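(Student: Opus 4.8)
The plan is to adapt the Weyl--von Neumann--Berg construction of \cite{davidson}, but to carry quantitative Schatten-$p$ estimates throughout and to use Lemma \ref{lmma1} to organise the monotone convergence in (v). First I would normalize, replacing each $A_i$ by an affine image so that $0\leq A_i\leq I$; this preserves commutativity, self-adjointness, and all five assertions up to harmless constants. Let $E(\cdot)$ denote the joint spectral measure of the commuting tuple, supported on the joint spectrum $\sigma\subseteq[0,1]^n$, so that $A_i=\int_\sigma x_i\,E(dx)$. I would then fix a refining sequence of Borel partitions $\sigma=\bigsqcup_{j=1}^{M_N}\Delta_j^{(N)}$ with $\mathrm{diam}\,\Delta_j^{(N)}\leq\epsilon_N\downarrow0$; since $\sigma$ is bounded in $\Real^n$ one has $M_N=O(\epsilon_N^{-n})$, and this exponent $n$ is precisely the source of the threshold $p\geq n$.

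Writing $E_j^{(N)}=E(\Delta_j^{(N)})$, I would build $P_N$ by Berg's diagonalization trick: fix an orthonormal basis $\{f_m\}$ of $\clh$ and let $P_N$ project onto the finite-dimensional span of $\{E_j^{(N)}f_m:1\leq j\leq M_N,\ 1\leq m\leq N\}$. Nesting the partitions makes the spanning sets increase, so $P_N\leq P_{N+1}$, and since $f_m=\sum_jE_j^{(N)}f_m\in\Ran P_N$ for $m\leq N$ we obtain $P_N\uparrow I$. The decisive structural feature is that each vector $E_{j'}^{(N)}f_m$ is an eigenvector of every $E_j^{(N)}$, so $\Ran P_N$ is invariant under all the joint spectral projections; consequently the step operator $D_i^{(N)}=\sum_j x_i^{(N,j)}E_j^{(N)}$, with $x^{(N,j)}$ the coordinatewise infimum of $\Delta_j^{(N)}$, commutes \emph{exactly} with $P_N$ and with each $D_{i'}^{(N)}$. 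Applying Lemma \ref{lmma1} coordinatewise shows that this infimum choice makes $D_i^{(N)}$ increase monotonically up to $A_i$, which is the germ of (v).

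One cannot, however, simply set $B_i^{(N)}=D_i^{(N)}$: the remainder $A_i-D_i^{(N)}$ has operator norm at most $\epsilon_N$ but is not compact, so it fails the $\mathcal{B}_p(\clh)$ conclusion (ii). I would therefore keep the exact diagonalization only inside the finite-dimensional $\Ran P_N$ and correct the infinite part separately. Assertions (iii) and (iv) are then immediate from rank-times-norm bookkeeping: $[A_i,P_N]=[A_i-D_i^{(N)},P_N]$ and $P_NA_iP_N-D_i^{(N)}P_N=P_N(A_i-D_i^{(N)})P_N$ each have rank at most $2\dim\Ran P_N=O(NM_N)$ and operator norm $O(\epsilon_N)$, whence their Schatten-$p$ norms are $O\!\left((NM_N)^{1/p}\epsilon_N\right)=O\!\left(N^{1/p}\epsilon_N^{\,1-n/p}\right)$, which tends to $0$ for $p>n$ once $\epsilon_N$ is chosen to decay fast enough to absorb the factor $N^{1/p}$. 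To define the commuting tuple $B_i^{(N)}$ I would force it to reduce $P_N$, replace the almost-commuting compressions $P_NA_iP_N$ (their mutual commutators are $O(\epsilon_N)$, being controlled by $\|[A_i,P_N]\|$) by genuinely commuting self-adjoint matrices on $\Ran P_N$, and on the complement install a commuting tuple differing from $A_i$ by a $\mathcal{B}_p(\clh)$-small compact operator; here the geometric weights $2^{-k}$ of Lemma \ref{lmma1} are exactly what make this last correction summable across dyadic scales while preserving the monotone convergence $B_i^{(N)}\uparrow A_i$.

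I expect the principal obstacle to be the simultaneous enforcement of the exact algebraic demands (i) and (v) against the merely approximate finite-rank data, together with the borderline index $p=n$. At $p=n$ the estimate $O(N^{1/p}\epsilon_N^{1-n/p})$ degenerates, so the partition must be refined adaptively, subdividing only those cells that carry appreciable spectral mass and grouping the rest, so that the effective rank grows strictly slower than $\epsilon_N^{-n}$; balancing this refinement against the attendant operator-norm loss is the delicate quantitative heart of the proof. The other sensitive point is that the commutativity required of $B_i^{(N)}$ on the infinite-dimensional range of $I-P_N$ cannot be repaired by any finite-rank modification, and must instead be supplied by a Kuroda-type diagonalization of the complementary tuple, again organized through the dyadic decomposition so as to remain compatible with the reduction (i) and the monotonicity (v).
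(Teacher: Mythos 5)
Your outline reproduces the general Berg strategy (normalization, spectral projections applied to a basis, rank-times-norm estimates, interpolation), but the two places you defer are exactly where the theorem lives, and neither of your proposed fixes works as stated. First, the endpoint $p=n$, which the theorem asserts. With a single-scale partition of the joint spectrum into $M_N=O(\epsilon_N^{-n})$ cells, the loss is structural, not a matter of tuning $\epsilon_N$: at scale $2^{-k}$ a full grid has $2^{nk}$ cells, so the relevant quantity $2^{-k}(\mathrm{rank})^{1/n}\sim 2^{-k}\cdot 2^{k}=1$ does not decay, and no schedule of $\epsilon_N$ rescues the $\mathcal{B}_n$ estimate. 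The paper's mechanism is not adaptive, mass-dependent refinement; it is multiscale and combinatorial. The space $\mathcal{L}_N$ is spanned by the products $\prod_{k=1}^{N}\prod_{i=1}^{n}\bigl(E_k^{(i)}\bigr)^{\epsilon}f_j$ over the dyadic projections of Lemma \ref{lmma1}, and the identity $\sum_{\epsilon=\pm 1}\prod_{i=1}^{n}\bigl(E_k^{(i)}\bigr)^{\epsilon}=I$ shows that each scale contributes at most $2^n-1$ (not $2^n$) new independent directions per vector, so $\dim\mathcal{L}_N\le N(2^n-1)^N+N$. Since $2^{-nk}(2^n-1)^k=(1-2^{-n})^k$ decays geometrically, the series $\sum_k 2^{-k}(\mathrm{rank}\,P_k)^{1/n}$ converges, which is precisely what yields $\|A_i-B_i^{(N)}\|_n\to 0$ and hence all $p\ge n$ by interpolation. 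Your sketch has no substitute for this counting gain.

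Second, you never actually construct the commuting tuple $B_i^{(N)}$. Repairing the almost-commuting compressions $P_NA_iP_N$ into exactly commuting matrices is a Lin-type step, which is false in general for $n\ge 3$ (there are almost-commuting triples of self-adjoint matrices far, in norm, from all commuting triples), and even where available it gives no Schatten control, no compatibility with (i), and no monotonicity (v); meanwhile your fallback for the infinite part --- a ``Kuroda-type diagonalization of the complementary tuple'' --- is circular, since producing a commuting Schatten-small perturbation of a commuting tuple is the very theorem being proved, and $A_i$ does not even restrict to the complement of $\mathrm{Ran}\,P_N$ because that subspace is not invariant. The paper sidesteps both problems with one formula: $B_i^{(N)}=\sum_{k=1}^{N}2^{-k}E_k^{(i)}+\sum_{k=N+1}^{\infty}2^{-k}E_k^{(i)}(I-P_k)$, where every later scale is corrected by its own projection $P_k$, not by $P_N$. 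Commutativity is then exact because the $E_k^{(i)}$ all commute and each $E_k^{(i)}$ commutes with $P_l$ for $k\le l$ (it maps the spanning vectors of $\mathcal{L}_l$ to themselves or to $0$); property (i) holds because $(I-P_k)P_N=0$ for $k>N$; property (ii) holds because $A_i-B_i^{(N)}=\sum_{k>N}2^{-k}E_k^{(i)}P_k$ is a norm-convergent sum of finite-rank terms controlled by the series above; and (v) follows from the explicit identity making $B_i^{(N+1)}P_{N+1}-B_i^{(N)}P_N$ a sum of two nonnegative operators, together with $P_NA_iP_N\to A_i$ strongly. In short, the tail correction and the commutativity are built into the definition from the start, whereas your proposal postpones them to a repair step that either fails or presupposes the result.
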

\begin{proof}
 One can assume without loss of generality that  $0\leq A_i \leq I$ for all $1\leq i\leq n$, 
 and therefore for each $i$, 
 by  Lemma \ref{lmma1},
 \begin{center}
 $A_i = \sum \limits_{k=1}^{\infty}2^{-k}E_k^{(i)}$,
\end{center}
where
$E_k^{(i)} = E_{A_i}\left(\bigcup \limits_{j=1}^{2^{k-1}}(2^{-k}(2j-1), 2^{-k}(2j)]\right)$ 
with $E_{A_i}$ the spectral measure associated 
to the bounded self-adjoint operator $A_i.$ 
Next set for $N\in \mathbb{N}$ (the set of natural numbers), 
\begin{equation*}
 \mathcal{L}_N \equiv \text{span} \{ \left[\prod_{k=1}^N \prod_{i=1}^n \left(E_k^{(i)}\right)
 ^{\epsilon}\right]
 f_j ~| ~~1\leq j\leq N;~\epsilon = \pm 1\},
\end{equation*}
where $\{f_1,f_2,....., f_{_N},......\}$ be a countable orthonormal basis of $\mathcal{H}$ and 
$\left(E_k^{(i)}\right)^1 = E_k^{(i)}$ and 
$\left(E_k^{(i)}\right)^{-1} = I- E_k^{(i)}.$
Thus $\mathcal{L}_N$ is a finite dimensional subspace of $\mathcal{H}$ and it has the following properties:

$(a)\hspace{0.1cm}\mathcal{L}_N \subseteq \mathcal{L}_{N+1}$,
$\hspace{0.1cm}(b)
\hspace{0.1cm}\overline{\left(\bigcup \limits_{N=1}^{\infty}\mathcal{L}_N\right)} = \mathcal{H}$
and $(c)\hspace{0.1cm} \textup{dim}\left( \mathcal{L}_N \right) \leq N\left(2^{n}-1\right)^N + N.$
\vspace{0.1in}

{\bf{$(a)$:}}
Now $\left[\prod \limits_{k=1}^N~\prod \limits_{i=1}^n \left(E_k^{(i)}\right)^
{\epsilon}\right] f_j \in \mathcal{L}_N$ ~~for ~~$1\leq j\leq N$, 
\vspace{0.1in}

i.e. $\left(E_N^{(n)}\right)^{\epsilon}\left[\prod \limits_{k=1}^{N-1}~\prod \limits_{i=1}^{n-1} 
\left(E_k^{(i)}\right)^{\epsilon}\right] f_j \in \mathcal{L}_N$ ~~for ~~$1\leq j\leq N$,
\vspace{0.1in}

i.e. $\left(I-E_N^{(n)}\right)\left[\prod \limits_{k=1}^{N-1}~\prod \limits_{i=1}^{n-1} 
\left(E_k^{(i)}\right)^{\epsilon}\right] f_j \in \mathcal{L}_N$ ~~and hence
\vspace{0.1in}

\hspace{0.5cm} $\left(E_N^{(n)}\right)\left[\prod \limits_{k=1}^{N-1}\prod \limits_{i=1}^{n-1} 
\left(E_k^{(i)}\right)^{\epsilon}\right] f_j \in \mathcal{L}_N$ ~~for ~~$1\leq j\leq N$,
\vspace{0.1in}
 
\hspace{-0.6cm} which implies that $\left[\prod \limits_{k=1}^{N-1}\prod \limits_{i=1}^{n-1} 
\left(E_k^{(i)}\right)^{\epsilon}\right] f_j \in \mathcal{L}_N$ for ~$1\leq j\leq N$.
By repeating the above argument we conclude that $\left(E_1^{(1)}\right)^{\epsilon}f_j 
\in \mathcal{L}_N$ for ~$1\leq j\leq N,$
i.e. $E_1^{(1)}f_j,\left(I-E_1^{(1)}\right)f_j \in \mathcal{L}_N$ for ~~$1\leq j\leq N$ and hence
$\{ f_1,f_2,f_3,\ldots,f_N\} \subset \mathcal{L}_N,$ proving that $\overline{\left(\bigcup 
\limits_{N=1}^{\infty}\mathcal{L}_N\right)} = \mathcal{H}$.
\vspace{0.1in}

{\bf{$(b)$:}} Again $\left[\prod \limits_{k=1}^{N+1}~\prod \limits_{i=1}^n 
\left(E_k^{(i)}\right)^{\epsilon}\right]f_j \in \mathcal{L}_{N+1}$ ~~for 
~~$1\leq j\leq N+1$. But on the other hand
\vspace{0.1in}

$\left[\prod \limits_{k=1}^{N+1}~\prod \limits_{i=1}^n \left(E_k^{(i)}\right)^
{\epsilon}\right]f_j = \prod \limits_{i=1}^n\left(E_{N+1}^{(i)}\right)^{\epsilon}
\left[\prod \limits_{k=1}^{N}~\prod \limits_{i=1}^n \left(E_k^{(i)}\right)^{\epsilon}\right]f_j$
\vspace{0.1in}

$= \left(E_{N+1}^{(n)}\right)^{\epsilon}\left[\prod\limits_{i=1}^{n-1}\left(E_{N+1}^{(i)}\right)
^{\epsilon}\right]\left[\prod \limits_{k=1}^{N}~\prod \limits_{i=1}^n \left(E_k^{(i)}\right)
^{\epsilon}\right]f_j$ and hence 
\vspace{0.1in}

$\left(E_{N+1}^{(n)}\right)\left[\prod \limits_{i=1}^{n-1}\left(E_{N+1}^{(i)}\right)^{\epsilon}\right]
\left[\prod \limits_{k=1}^{N}~\prod \limits_{i=1}^n \left(E_k^{(i)}\right)^{\epsilon}\right]f_j,$  
\vspace{0.1in}

\hspace{1cm} $\left(I-E_{N+1}^{(n)}\right)\left[\prod \limits_{i=1}^{n-1}\left(E_{N+1}^{(i)}\right)
^{\epsilon}\right]\left[\prod \limits_{k=1}^{N}~\prod \limits_{i=1}^n \left(E_k^{(i)}\right)
^{\epsilon}\right]f_j \in \mathcal{L}_{N+1}$ ~~for ~~$1\leq j\leq N+1$.
\vspace{0.1in}

Therefore, $\prod \limits_{i=1}^{n-1}\left(E_{N+1}^{(i)}\right)^{\epsilon}
\left[\prod \limits_{k=1}^{N}~\prod \limits_{i=1}^n \left(E_k^{(i)}\right)^{\epsilon}\right]
f_j \in \mathcal{L}_{N+1}$ ~~for ~~$1\leq j\leq N+1$.
\vspace{0.1in}

By repeating the above argument we conclude that 
\vspace{0.1in}

$\left[\prod \limits_{k=1}^{N}~\prod \limits_{i=1}^n \left(E_k^{(i)}\right)^{\epsilon}\right]
f_j \in \mathcal{L}_{N+1}$ for $1\leq j\leq N+1$  
and hence $\mathcal{L}_N \subseteq \mathcal{L}_{N+1}$ 
\vspace{0.1in}

{\bf{$(c)$:}} According to the definition of $E_K^{(i)}$, it follows that for each 
fixed $k\in \mathbb{N}$,
\begin{equation}\label{identity}
 \sum_{\epsilon = \pm 1} \prod_{i=1}^{n}\left(E_k^{(i)}\right)^{\epsilon}=I.
\end{equation}
We claim that for any fixed vector $f\in \mathcal{H}$, the  
$\text{span}~\{\left[\prod\limits_{k=1}^N\prod\limits_{i=1}^n \left(E_k^{(i)}\right)^{\epsilon}\right] f 
: \epsilon = \pm 1\}$
contains at most $\left(2^n-1\right)^N$ linearly independent vectors, without counting $f$. 
We prove our claim by induction on $N$. For $N=1,$ because of the identity \eqref{identity} we conclude that
the $\text{span}~\{\left[\prod\limits_{i=1}^n \left(E_1^{(i)}\right)^{\epsilon}\right] f 
: \epsilon = \pm 1\}$ contains at most $\left(2^n-1\right)$ linearly independently vectors besides $f$. 
Since $\{A_i\}_{1\leq i\leq n}$ is a commuting family,  we have the following:
$$\text{span}~\{\left[\prod\limits_{k=1}^{N+1}\prod\limits_{i=1}^n \left(E_k^{(i)}\right)^{\epsilon}\right] f 
: \epsilon = \pm 1\} : = \text{span}~\{\prod\limits_{i=1}^n \left(E_{N+1}^{(i)}\right)^{\epsilon}
\left[\prod\limits_{k=1}^N\prod\limits_{i=1}^n 
\left(E_k^{(i)}\right)^{\epsilon}\right] f 
: \epsilon = \pm 1\} $$
and thus by the induction hypothesis,
$\text{span}~\{\left[\prod\limits_{k=1}^N\prod\limits_{i=1}^n \left(E_k^{(i)}\right)^{\epsilon}\right] f 
: \epsilon = \pm 1\}$ contains at most $\left(2^n-1\right)^N$ linearly independent vectors, other than $f$.
Therefore using the equation \eqref{identity} we conclude that the 
$\text{span}~\{\left[\prod\limits_{k=1}^{N+1}\prod\limits_{i=1}^n \left(E_k^{(i)}\right)^{\epsilon}\right] f 
: \epsilon = \pm 1\}$ contains at most 
$$2^n\left(2^n-1\right)^N - \left(2^n-1\right)^N = \left(2^n-1\right)^{N+1}$$
number of linearly independent
vectors, other than $f$ itself, completing the induction.
Hence for any fixed vector $f\in \mathcal{H}$, the  
$\text{span}~\{\left[\prod\limits_{k=1}^N\prod\limits_{i=1}^n \left(E_k^{(i)}\right)^{\epsilon}\right] f 
: \epsilon = \pm 1\}$
contains the maximum of possible $\{\left(2^n-1\right)^N +1\}$ linearly independent vectors, including
the vector $f$. This implies that
$\mathcal{L}_N$ contains at most $N\{\left(2^n-1\right)^N +1\}$ number of linearly independent vectors 
and therefore
$\textup{dim} \left(\mathcal{L}_N\right) \leq N\left(2^{n}-1\right)^N + N.$ Now we set
$P_N$ to be the finite rank projection associated with the finite dimensional
subspace $\mathcal{L}_N.$
Then by $(a)$ and $(b)$ the sequence  $\{P_N\}$ increases to $I$. Next we define 
\begin{equation*}
 B_i^{(N)} = \sum _{k=1}^N 2^{-k} E_k^{(i)} + \sum_{k=N+1}^{\infty} 2^{-k} E_k^{(i)} (I-P_k),
\end{equation*}
and observe that since $\{E_k^{(i)}\}_{_{1\leq k\leq N ; 1\leq i\leq n}}$ 
is a commuting family and since each member of that family for fixed $k$ 
commutes with $P_l$ for $1\leq k\leq l,$ it is easy to verify that 
\begin{equation*}
\begin{split}
  & E_k^{(i)} (I-P_k) E_{k'}^{(i)} (I-P_{k'}) = (I-P_k)(I-P_{k'})E_{k'}^{(i)}E_k^{(i)}\\
  &= (I-P_{k'})E_{k'}^{(i)}E_k^{(i)} = E_{k'}^{(i)} (I-P_{k'})E_k^{(i)} (I-P_k),
\end{split}
\end{equation*}
where we have assumed without loss of generality that $k\leq k'$. Thus  $\{B_i^{(N)}\}_{1\leq i\leq n}$ 
is a commuting family of positive self-adjoint
contractions and since $(I-P_k)P_N = 0$ for $k\geq N+1,$ it follows that 
\begin{equation}\label{dfnb}
 P_NB_i^{(N)}P_N = B_i^{(N)}P_N = \sum_{k=1}^N 2^{-k}E_k^{(i)} P_N,
\end{equation}
and hence $B_i^{(N)}P_N $, a finite dimensional self-adjoint operator in the Hilbert space $P_N\mathcal{H}$.
Furthermore, $A_i - B_i^{(N)} = \sum \limits_{k=N+1}^{\infty} 2^{-k} E_k^{(i)} P_k$ and 
\begin{equation*}
\begin{split}
\hspace{-0.8cm} \left\|A_i - B_i^{(N)} \right\|_n \leq \sum_{k=N+1}^{\infty} 2^{-k}\left\|P_k\right\|_n \leq  
\sum_{k=N+1}^{\infty} 2^{-k} \left[k\{1+\left(2^n-1\right)^k\}\right]^{\frac{1}{n}} \\
& \hspace{-9cm}=\sum_{k=N+1}^{\infty}k^{\frac{1}{n}}\left[2^{-nk} + \left(1-2^{-n}\right)^k\right]^{\frac{1}{n}}\\
& \hspace{-9cm}\leq \sum_{k=N+1}^{\infty}k^{\frac{1}{n}} 2^{-k} + \sum_{k=N+1}^{\infty}
k^{\frac{1}{n}} \left[\left(1-2^{-n}\right)^{\frac{1}{n}}\right]^k, 
\end{split}
\end{equation*}
where we have used that for $a,b>0$ ,~$(a+b)^{\frac{1}{n}}\leq (a^{\frac{1}{n}}+b^{\frac{1}{n}})$.
Since for fixed n, $(1-2^{-n})^{\frac{1}{n}}<1$, and since 
$ \sum\limits_{k=1}^{\infty}k^{\frac{1}{n}}\alpha^{k}<\infty$ for $\alpha < 1$,
it follows that for each $i$($1\leq i\leq n$), 
$\left\|A_i - B_i^{(N)} \right\|_n\longrightarrow 0$
as $N\longrightarrow \infty$.
Therefore for any $p\geq n$ we get
\begin{equation*}
 \begin{split}
 \hspace{-2cm} \left\|A_i - B_i^{(N)} \right\|_p\leq \left(\|A_i\|+\|B_i^{(N)}\|\right)^{\left(1-\frac{n}{p}\right)}
 \left\|A_i - B_i^{(N)} \right\|_n^{\frac{n}{p}}\\
 & \hspace{-6.7cm} \leq 2^{\left(1-\frac{n}{p}\right)}
 \left\|A_i - B_i^{(N)} \right\|_n^{\frac{n}{p}}
 \longrightarrow 0
\end{split}
\end{equation*}
as $N\longrightarrow \infty$. Thus
\begin{equation}
 \left\|[A_i, P_N]\right\|_p =
 \left\|\left[A_i-B_i^{(N)}, P_N\right]\right\|_p\leq 2 \left\|A_i-B_i^{(N)}\right\|_p \longrightarrow 0 ~\text{as}
 ~N\longrightarrow \infty,
\end{equation}
for any $p\geq n$. Moreover,
\begin{equation}\label{eqfour}
 \hspace{-0.5cm} \left\|P_NA_iP_N - P_NB_i^{(N)}P_N\right\|_p = \left\|P_N\left(A_i-B_i^{(N)}\right)
 P_N\right\|_p \leq \left\|A_i -B_i^{(N)}\right\|_p \longrightarrow 0
\end{equation}
as $N\longrightarrow \infty$ for any $p\geq n$.
Now from \eqref{dfnb} we have
\begin{equation*}
\begin{split}
 & B_i^{(N+1)} = \sum _{k=1}^{N+1} 2^{-k} E_k^{(i)} P_{N+1}\\
 & \hspace{1.3cm} = \sum _{k=1}^{N} 2^{-k} E_k^{(i)} \left(P_{N+1}-P_N\right)
 + \sum _{k=1}^{N} 2^{-k} E_k^{(i)} P_{N} + 2^{-(N+1)}E_{N+1}^{(i)}P_{N+1}.
\end{split}
\end{equation*}
Thus 
\begin{equation*}
\begin{split}
 B_i^{(N+1)} - B_i^{(N)} 
 = \sum _{k=1}^{N} 2^{-k} \left(P_{N+1}-P_N\right) E_k^{(i)} \left(P_{N+1}-P_N\right)
 + 2^{-(N+1)}P_{N+1} E_{N+1}^{(i)}P_{N+1} \geq 0,
\end{split}
\end{equation*}
since $\{P_N\} \uparrow I$. Finally by using \eqref{eqfour} and the fact that
$P_NA_iP_N \longrightarrow A_i$ strongly as $N\longrightarrow \infty$, we conclude that 
$B_i^{(N)}P_N \longrightarrow A_i$ strongly as $N\longrightarrow \infty$. This completes the proof.
\end{proof}

\begin{rem}
The choice that $0\leq A_i\leq I$ does not materially affect the calculations of 
Theorem \ref{th: WBG}. For if $C_i \in \mathcal{B}(\mathcal{H})$ ($1\leq i\leq n$), then
we can set $$A_i = \left(2\|C_i\|\right)^{-1}C_i + \frac{1}{2}I$$
so that $0\leq A_i\leq I$ and thus 
$C_i = 2\|C_i\|(\sum 2^{-k}E_k^{(i)}-\frac{1}{2}I)$. Thus choosing 
$$B_i^{(N)} = 2\|C_i\|\{\sum_{k=1}^N 2^{-k}E_k^{(i)} + 
\sum_{k=N+1}^{\infty}(I-P_k)E_k^{(i)} - \frac{1}{2}I\}$$ one has 
$\|[C_i,B_i^{(N)}]\|_p = 2\|C_i\|\|[A_i,B_i^{(N)}]\|_p\rightarrow 0$
as $N\rightarrow \infty$ for $p\geq n$.
\end{rem}

\section{Spectral Integrals of Operator Functions and Stokes-like Formula}\label{sec: third}
In this section we are going to define spectral integrals of operator functions 
in the next few lemmas.
\begin{lemma}\label{spi1}
Let $H$ be a bounded self-adjoint operator in $\mathcal{H}$ with spectrum in $[a,b]$ and 
let $A:[a,b] \longrightarrow \mathcal{B}(\mathcal{H})$ be operator norm H$\ddot{o}$lder 
continuous with H$\ddot{o}$lder index
$k> \frac{1}{2}$, that is,  
$$\|A(\alpha_1) - A(\alpha_2)\| \leq C|\alpha_1-\alpha_2|^k,$$ \color{black}
where $C$ is some positive constant and $k> \frac{1}{2}.$
Then
$$\int_a^b A(\alpha) E_H(d\alpha)$$
is well-defined as a operator norm Riemann-Stieltjes integral, where $E_H(.)$ is the spectral
measure corresponding to the bounded self-adjoint operator $H$.
\end{lemma}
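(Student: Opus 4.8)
The plan is to realise $\int_a^b A(\alpha)\,E_H(d\alpha)$ as the operator-norm limit of Riemann--Stieltjes sums
\[
S(\mathcal{P},\xi)=\sum_{i=1}^{m}A(\xi_i)\,\Delta_i,\qquad \Delta_i=E_H\big((\alpha_{i-1},\alpha_i]\big),
\]
attached to a tagged partition $\mathcal{P}:a=\alpha_0<\cdots<\alpha_m=b$ with tags $\xi_i\in[\alpha_{i-1},\alpha_i]$. Since $\mathcal{B}(\mathcal{H})$ is a Banach space, it suffices to show that these sums form a Cauchy net as the mesh $|\mathcal{P}|\to 0$. The feature that makes this possible is that the increments $\Delta_i$ are \emph{mutually orthogonal} projections, a consequence of $E_H$ being a spectral (projection-valued) measure; this orthogonality is what converts the weak H\"older control of $A$ into a genuine norm estimate.

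The computational engine is the elementary bound: if $\{Q_m\}$ are mutually orthogonal projections and $\{B_m\}\subset\mathcal{B}(\mathcal{H})$, then
\[
\Big\|\sum_m B_m Q_m\Big\|\le\Big(\sum_m\|B_m\|^2\Big)^{1/2}.
\]
Indeed, for a unit vector $x$ the vectors $Q_m x$ are orthogonal with $\sum_m\|Q_m x\|^2=1$, and Cauchy--Schwarz gives $\|\sum_m B_mQ_mx\|\le\sum_m\|B_m\|\,\|Q_mx\|\le(\sum_m\|B_m\|^2)^{1/2}$. I will use this twice. First, whenever $\mathcal{P}'$ refines $\mathcal{P}$, I group the terms of $S(\mathcal{P})-S(\mathcal{P}')$ according to the coarse interval $I_i=(\alpha_{i-1},\alpha_i]$ containing them, writing $S(\mathcal{P})-S(\mathcal{P}')=\sum_i T_i$ with $T_i=\sum_{J_l\subseteq I_i}[A(\xi_i)-A(\eta_l)]E_H(J_l)$. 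Each $T_i$ is supported on the range of $E_H(I_i)$, and these ranges are mutually orthogonal, so the displayed bound yields $\|S(\mathcal{P})-S(\mathcal{P}')\|\le(\sum_i\|T_i\|^2)^{1/2}$.

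The crux is a \emph{local} estimate that is uniform in the sub-partition: for any interval $J$, any tag $\xi\in J$, and any finer tagged partition $\{J_l\}$ of $J$,
\[
\Big\|\sum_l[A(\xi)-A(\eta_l)]E_H(J_l)\Big\|\le M\,|J|^{k},
\]
where $M=M(C,k)$ does \emph{not} depend on the sub-partition. The naive use of the orthogonality bound only gives $C|J|^k\sqrt{\#\{l\}}$, whose spurious factor I remove by dyadic bisection: split $J$ at its midpoint, isolate the at most one piece straddling the midpoint (contribution $\le C|J|^k$), transfer the tag $\xi$ to a tag in each half at a cost $\le C|J|^k$ by H\"older continuity, and recombine the two halves and the straddler through the orthogonality bound applied to their orthogonal supports. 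This yields a self-referential inequality of the shape $M\le(2(M2^{-k}+C)^2+C^2)^{1/2}$, which admits a finite solution \emph{precisely because} $k>\tfrac12$ forces $2^{\,1-2k}<1$. This is the single place where the hypothesis $k>\tfrac12$ is indispensable, and it is the main obstacle of the whole argument.

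Granting the local estimate, the proof closes quickly. With $\|T_i\|\le M|I_i|^k$ and $|I_i|\le|\mathcal{P}|=\delta$,
\[
\|S(\mathcal{P})-S(\mathcal{P}')\|^2\le M^2\sum_i|I_i|^{2k}\le M^2\delta^{2k-1}\sum_i|I_i|=M^2(b-a)\,\delta^{2k-1},
\]
which tends to $0$ as $\delta\to0$ since $2k-1>0$. For two arbitrary tagged partitions $\mathcal{P}_1,\mathcal{P}_2$ of mesh at most $\delta$, I compare each to their common refinement $\mathcal{P}_1\vee\mathcal{P}_2$ and obtain $\|S(\mathcal{P}_1)-S(\mathcal{P}_2)\|\le 2M\sqrt{b-a}\,\delta^{k-1/2}\longrightarrow 0$. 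Hence the net $S(\mathcal{P},\xi)$ is Cauchy as $|\mathcal{P}|\to0$, its limit is independent of the choice of tags, and $\int_a^b A(\alpha)\,E_H(d\alpha)$ is well defined as an operator-norm Riemann--Stieltjes integral. Everything beyond the sub-partition-uniform local estimate is bookkeeping around the orthogonality bound.
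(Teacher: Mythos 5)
Your argument is correct, and it shares the paper's two core ingredients --- the mutual orthogonality of the spectral projections attached to disjoint intervals, and Cauchy--Schwarz --- but it is genuinely stronger at the decisive point. The paper compares a partition $P$ only with a refinement $P'$ obtained by cutting each interval of $P$ into the same number $m$ of pieces, and its final estimate
\begin{equation*}
\|\Sigma_P-\Sigma_{P'}\|\;\le\; C\,m^{1/2}\,\|P\|^{\,k-\frac12}(b-a)^{1/2}
\end{equation*}
carries exactly the spurious factor $\sqrt{m}$ that you identified: the bound is not uniform over refinements, so as written it yields convergence only along special refinement schemes (the authors remark that ``one can use diadic partitions'', where $m$ stays fixed and $k>\tfrac12$ makes the successive differences summable), while uniformity over tags and over arbitrary pairs of partitions --- what the limit over the net of all tagged partitions actually requires --- is left implicit. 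Your sub-partition-uniform local estimate $\bigl\|\sum_l[A(\xi)-A(\eta_l)]E_H(J_l)\bigr\|\le M|J|^{k}$, obtained by the bisection bootstrap, removes that factor and delivers the mesh-only bound $\|S(\mathcal{P}_1)-S(\mathcal{P}_2)\|\le 2M\sqrt{b-a}\,\delta^{\,k-\frac12}$ for arbitrary tagged partitions of mesh at most $\delta$; in this sense your proof closes a uniformity gap that the paper's proof glosses over, at the cost of the extra recursive lemma.

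Two pieces of bookkeeping should be made explicit when you write this up. First, state the local estimate for arbitrary finite families of disjoint tagged subintervals of $J$, not for exact partitions of $J$: after bisection, the subintervals lying inside one half do not cover that half, so the recursion needs the more general form. Second, the self-referential inequality $M\le\bigl(2(M2^{-k}+C)^2+C^2\bigr)^{1/2}$ by itself does not prove that $M$ is finite; you must run it as an induction on the number $n$ of subintervals, using the a priori bound $M_n\le C\sqrt{n}$ together with the fact that the map $x\mapsto\bigl(2(x2^{-k}+C)^2+C^2\bigr)^{1/2}$ is increasing with asymptotic slope $2^{\frac12-k}<1$, to conclude $\sup_n M_n<\infty$. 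Both repairs are routine, and with them your argument is complete.
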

\begin{proof}
(see also \cite{birsolo0}) Let $P\equiv$ $ \{\Delta_i\}_{i=1}^n$ be a partition of the interval $[a,b]$. Let 
$P'\equiv$ $ \{\Delta_{ij}\}_{[1\leq i\leq n~};$ $_{~1\leq j\leq m]}$
be another partition of $[a,b],$ which is 
finer than $P$ (i.e. $P'\supseteq P$) and is obtained by dividing each interval
$\Delta_i$ of $P$ into equal number (say $m$) of subintervals $\{\Delta_{ij}\}$ 
(i.e. $\Delta_i = \bigcup \limits_{j=1}^m \Delta_{ij}$).
In particular one can use diadic partitions. For $f\in \mathcal{H},$ consider the
following Riemann-Stieltjes sums
$$\Sigma_P f \equiv \sum \limits_{i=1}^n A(\xi_i) E_H(\Delta_i) f
\quad \text{and} \quad \Sigma_{P'}f = \sum \limits_{i=1}^n \sum 
\limits_{j=1}^m A(\xi_{ij}) E_H(\Delta_{ij})f,$$
where $\xi_i \in \Delta_i$ and $\xi_{ij} \in \Delta_{ij}.$ Hence, if we write $|\Delta_i|=$
\emph{length of the interval} $\Delta_i$, we get that
\begin{equation*}
\left(\Sigma_P- \Sigma_{P'}\right) f = \sum \limits_{i=1}^n \sum \limits_{j=1}^m [A(\xi_i) - 
A(\xi_{ij})]E_H(\Delta_{ij})f.
\end{equation*}
Therefore by hypothesis and applying triangle-inequality, 
Cauchy-Schwartz inequality and using the fact that
$|\xi_i-\xi_{ij}| \leq |\Delta_i|,$ we conclude that
\begin{equation*}
\begin{split}
\hspace{0cm} \|\left(\Sigma_P  - \Sigma_{P'}\right) f\| 
\leq \sum_{i=1}^n \sum_{j=1}^m \|A(\xi_i) - A(\xi_{ij})\|\|E_H(\Delta_{ij})f\| 
\leq C \sum_{i=1}^n \sum_{j=1}^m  |\xi_i-\xi_{ij}|^k \|E_H(\Delta_{ij})f\|\\
& \hspace{-15cm} \leq C \left(\sum_{i=1}^n \sum_{j=1}^m |\xi_i-\xi_{ij}|^{2k} \right)^{\frac{1}{2}} 
\left(\sum_{i=1}^n \sum_{j=1}^m \|E_H(\Delta_{ij})f\|^2\right)^{\frac{1}{2}}
= C \|f\| \left(\sum_{i=1}^n \sum_{j=1}^m |\xi_i-\xi_{ij}|^{2k} \right)^{\frac{1}{2}} \\
& \hspace{-15cm} \leq C \|f\| \left(\sum_{i=1}^n \sum_{j=1}^m |\Delta_i|^{2k} \right)^{\frac{1}{2}}
 = C \|f\| m^{\frac{1}{2}} \|P\|^{\left(k-\frac{1}{2}\right)} (b-a)^{\frac{1}{2}},
\end{split}
\end{equation*}
where $\|P\|$ is the norm ($\equiv \max\limits_{1\leq i\leq n}|\Delta_i|$) of the partition 
$P\equiv \{\Delta_i\}_{i=1}^n$. Hence 
\begin{equation*}
\begin{split}
 \|\Sigma_P  - \Sigma_{P'}\| = \sup \limits_{f\in \mathcal{H}~;~f\neq 0} 
 \frac{\|\left(\Sigma_P  - \Sigma_{P'}\right) f\|}{\|f\|}\\
 & \hspace{-4.8cm} \leq C m^{\frac{1}{2}}\|P\|^{k-\frac{1}{2}}
 (b-a)^{\frac{1}{2}}\rightarrow 0 ~~~\text{as} ~~~\|P\|\longrightarrow 0,
 \end{split}
\end{equation*}
proving the existence of the integral $\int \limits_a^b A(\alpha) E_H(d\alpha)$ as a operator norm 
Riemann-Stieltjes 
integral.
\end{proof}
\begin{lemma}\label{spi2}
 Let $A,B,C$ be three bounded self-adjoint operators in an infinite dimensional Hilbert space $\mathcal{H}$ 
such that
$\sigma(A), \sigma(B), \sigma(C) \subseteq [a,b].$ Let $\phi:[a,b]^2\longrightarrow \mathbb{C}$
be a bounded measurable 
function.
Then the symbol $\int_A^B \phi(x,C) dx$, defined as: 
\begin{equation*}
\int_A^B \phi(x,C) dx \equiv \int_a^b \left(\int_a^{\alpha} \phi(x,C) dx \right) [E_B(d\alpha) - E_A(d\alpha)],
\end{equation*}
(where $E_A(.), E_B(.)$ are the spectral measures of the operators $A, B$ respectively), 
exists as a bounded operator.
\end{lemma}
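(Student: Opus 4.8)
The plan is to construct the integral in the two layers suggested by its definition: first make sense of the inner integral $\Phi(\alpha) := \int_a^\alpha \phi(x,C)\,dx$ as a bounded operator depending on $\alpha$, and then feed $\Phi$ into the outer Riemann--Stieltjes integral by invoking Lemma \ref{spi1}. For the inner layer, observe that for each fixed $x\in[a,b]$ the slice $\phi(x,\cdot)$ is a bounded measurable function on $[a,b]$, so the functional calculus yields a bounded operator $\phi(x,C) = \int_a^b \phi(x,\lambda)\,E_C(d\lambda)$ with $\|\phi(x,C)\| \leq M$, where $M := \sup_{[a,b]^2}|\phi|$. I would then \emph{define} $\Phi(\alpha)$ weakly: for $f,g\in\mathcal{H}$ the scalar map $x\mapsto \langle \phi(x,C)f,g\rangle = \int_a^b \phi(x,\lambda)\,\langle E_C(d\lambda)f,g\rangle$ is measurable (the complex measure $\langle E_C(\cdot)f,g\rangle$ has total variation at most $\|f\|\,\|g\|$) and bounded by $M\|f\|\,\|g\|$, so the form $(f,g)\mapsto \int_a^\alpha \langle \phi(x,C)f,g\rangle\,dx$ is a bounded sesquilinear form and determines a unique bounded operator $\Phi(\alpha)$ with $\|\Phi(\alpha)\|\leq M(\alpha-a)$.

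The key structural observation driving the outer layer is that $\Phi$ is Lipschitz, hence H\"older, in operator norm. Indeed, for $\alpha_2<\alpha_1$ the same form estimate gives $|\langle(\Phi(\alpha_1)-\Phi(\alpha_2))f,g\rangle| \leq \int_{\alpha_2}^{\alpha_1}\|\phi(x,C)\|\,dx\;\|f\|\,\|g\| \leq M|\alpha_1-\alpha_2|\,\|f\|\,\|g\|$, so that $\|\Phi(\alpha_1)-\Phi(\alpha_2)\| \leq M|\alpha_1-\alpha_2|$. Thus $\Phi$ is operator-norm H\"older continuous with index $k=1>\tfrac12$, which is precisely the hypothesis required by Lemma \ref{spi1}. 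It is worth noting that the integrand $x\mapsto\phi(x,C)$ itself need not be norm-continuous, since $\phi$ is only measurable in $x$; it is the integration against $dx$ that manufactures the regularity.

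I would then handle the outer integral by splitting the signed operator-valued measure, writing $\int_a^b \Phi(\alpha)\,[E_B(d\alpha)-E_A(d\alpha)] = \int_a^b \Phi(\alpha)\,E_B(d\alpha) - \int_a^b \Phi(\alpha)\,E_A(d\alpha)$ and applying Lemma \ref{spi1} to each term separately, once with $H=B$ and once with $H=A$. Since $\Phi$ is H\"older with index $1>\tfrac12$, Lemma \ref{spi1} guarantees that each of the two operator-norm Riemann--Stieltjes integrals exists as a bounded operator, and their difference is again bounded, yielding the asserted existence.

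The main obstacle is exactly the fact that the outer measure is the difference $E_B-E_A$ rather than a single spectral measure: one cannot feed $E_B-E_A$ directly into the proof of Lemma \ref{spi1}, because the crucial Cauchy--Schwarz/Bessel step there relies on the orthogonality identity $\sum_j\|E_H(\Delta_j)f\|^2=\|f\|^2$, valid only for a genuine projection-valued measure, and $E_B-E_A$ is not one. Splitting into two separate spectral integrals is what circumvents this difficulty. A secondary point that must be checked with care is the measurability in $x$ of the form $\langle\phi(x,C)f,g\rangle$ and the passage from the bounded sesquilinear form to an honest bounded operator $\Phi(\alpha)$; both follow from the joint measurability and boundedness of $\phi$ together with the uniform estimate $\|\phi(x,C)\|\leq M$.
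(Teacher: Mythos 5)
Your proposal is correct and follows essentially the same route as the paper: construct the inner integral $\int_a^{\alpha}\phi(x,C)\,dx$ as a bounded operator, observe it is operator-norm Lipschitz (hence H\"older with index $1>\tfrac12$), and then apply Lemma \ref{spi1} separately to the spectral measures $E_A$ and $E_B$ before subtracting. The only difference is cosmetic: the paper realizes the inner integral as a Bochner integral of the $\mathcal{B}(\mathcal{H})$-valued map $x\mapsto\phi(x,C)$, whereas you define it via a bounded sesquilinear form, which if anything is the more careful construction since norm-measurability of an operator-valued map is not automatic from measurability of $\phi$.
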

\begin{proof}
 Since $\phi$ is bounded and measurable on $[a,b]^2,$ then by spectral theorem we write
$\phi(x,C) = \int \limits_a^b \phi(x,y) E_C(dy),$ where $E_C(.)$ is the spectral measure 
corresponding to the self-adjoint
operator $C$ and hence $\phi(x,C)$ is a bounded operator with operator norm
\begin{equation*}
 \|\phi(x,C)\| = \sup \{|\phi(x,y)|: ~y\in [a,b]\} = \|\phi\|_{\infty}
\end{equation*}
for all $x\in [a,b]$. Now the map $[a,b] \ni x\longmapsto \phi(x,C) 
\in \mathcal{B}(\mathcal{H})$ is a $\mathcal{B}(\mathcal{H})$-valued 
bounded measurable function and hence the integral $\int \limits_a^{\alpha} \phi(x,C) dx$ exists 
as a Bochner integral
for each fixed $\alpha \in [a,b].$
Moreover, the map $T:[a,b] \longrightarrow \mathcal{B}(\mathcal{H})$ defined by
$T(\alpha) = \int \limits_a^{\alpha} \phi(x,C)dx$ is operator norm H$\ddot{o}$lder continuous 
with H$\ddot{o}$lder
index $1\left(> \frac{1}{2}\right).$ i.e.
\begin{equation*}
 \|T(\alpha_1) - T(\alpha_2)\| \leq \|\phi\|_{\infty} |\alpha_1-\alpha_2| ~~\text{for}~~\alpha_1, 
 \alpha_2 \in [a,b].
\end{equation*}
Thus by Lemma \ref{spi1}, the integrals $\int \limits_a^b T(\alpha) E_A(d\alpha)$ and $\int \limits_a^b 
T(\alpha) E_B(d\alpha)$
exist as a operator norm Riemann-Stieltjes integral. Hence 
$\int \limits_a^b\left(\int \limits_a^{\alpha}\phi(x,C)dx\right)[E_B(d\alpha) - E_A(d\alpha)]$ is
well-defined as a bounded operator and we
denote it by the symbol $\int_A^B \phi(x,C) dx.$
\end{proof}

We derive two formulae for the trace of a Stokes-like expression,
one in terms of a spectral function and the other in terms of divided
differences. First we need a simple lemma.

\begin{lemma}\label{mainsec1}
Let $\psi \in L^{\infty} ([a,b]^2).$ Then there exists two measurable 
functions $\phi_1, \phi_2$ on 
$[a,b]\times [a,b]$ such that $\phi_1$ and $\phi_2$ are differentiable (almost everywhere) with respect to the second
and first variable respectively with bounded derivatives such that,
$$\frac{\partial \phi_2}{\partial x}(x,y) - \frac{\partial \phi_1}{\partial y}(x,y) = \psi(x,y).$$
Moreover $\phi_1$ and $\phi_2$ are Lipschitz in the second and first variable
respectively, uniformly with respect to the other variable.
Conversely, if $\phi_1$ and $\phi_2$ are two measurable functions
differentiable with respect to the second and first
variable respectively with bounded measurable derivatives, then
$$\psi(x,y) = \frac{\partial \phi_2}{\partial x}(x,y) - 
\frac{\partial \phi_1}{\partial y}(x,y) \in L^{\infty}([a,b]^2).$$
\end{lemma}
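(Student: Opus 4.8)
The plan is to prove the two directions separately, the forward (existence) direction being the substantive one. For existence I would simply integrate $\psi$ in one variable and set the other potential to zero; to keep the construction symmetric one may instead split the integration evenly between the two variables. Concretely, I would take
\[
\phi_2(x,y) = \int_a^x \psi(t,y)\,dt, \qquad \phi_1 \equiv 0,
\]
or, symmetrically, $\phi_2(x,y)=\tfrac12\int_a^x\psi(t,y)\,dt$ and $\phi_1(x,y)=-\tfrac12\int_a^y\psi(x,s)\,ds$. The first task is to check that $\phi_2$ is jointly measurable on $[a,b]^2$; this follows from the joint measurability of $\psi$ together with Fubini's theorem, writing $\phi_2(x,y)=\int_a^b \mathbf{1}_{\{t\le x\}}\psi(t,y)\,dt$ and noting that the integrand is jointly measurable in $(t,x,y)$, so integrating out $t$ yields a jointly measurable function of $(x,y)$.

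The Lipschitz estimate is immediate: for fixed $y$,
\[
|\phi_2(x_1,y)-\phi_2(x_2,y)| = \Bigl|\int_{x_2}^{x_1}\psi(t,y)\,dt\Bigr| \le \|\psi\|_{\infty}\,|x_1-x_2|,
\]
so $\phi_2$ is Lipschitz in $x$ with constant $\|\psi\|_{\infty}$, uniformly in $y$ (and $\phi_1\equiv 0$ is trivially Lipschitz in $y$; in the symmetric version the same computation applies to $\phi_1$ in $y$). For the differentiation claim, I would fix $y$ and apply the Lebesgue differentiation theorem (the fundamental theorem of calculus for Lebesgue integrals) to the bounded function $t\mapsto\psi(t,y)$: for almost every $x$ one has $\dfrac{\partial \phi_2}{\partial x}(x,y)=\psi(x,y)$, with $\bigl|\tfrac{\partial \phi_2}{\partial x}\bigr|\le\|\psi\|_{\infty}$.

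The one point requiring care is promoting this slice-wise almost-everywhere statement to an almost-everywhere statement on the square. Since the identity $\tfrac{\partial \phi_2}{\partial x}(x,y)=\psi(x,y)$ holds for a.e. $x$ for \emph{each} fixed $y$, the exceptional set $\{(x,y): \tfrac{\partial \phi_2}{\partial x}(x,y)\neq\psi(x,y)\}$ has every $y$-section of one-dimensional Lebesgue measure zero; by Fubini it is therefore a two-dimensional null set. Hence $\tfrac{\partial \phi_2}{\partial x}-\tfrac{\partial \phi_1}{\partial y}=\psi$ almost everywhere on $[a,b]^2$, with both partial derivatives bounded, as required.

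The converse is routine and I would dispatch it in a line: if $\phi_1,\phi_2$ are as described, with bounded measurable partial derivatives $\tfrac{\partial \phi_1}{\partial y}$ and $\tfrac{\partial \phi_2}{\partial x}$, then $\psi=\tfrac{\partial \phi_2}{\partial x}-\tfrac{\partial \phi_1}{\partial y}$ is measurable and satisfies $\|\psi\|_{\infty}\le\bigl\|\tfrac{\partial \phi_2}{\partial x}\bigr\|_{\infty}+\bigl\|\tfrac{\partial \phi_1}{\partial y}\bigr\|_{\infty}<\infty$, so $\psi\in L^{\infty}([a,b]^2)$. The only genuine obstacle is the Fubini argument in the third paragraph; everything else is elementary.
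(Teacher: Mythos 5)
Your proposal is correct and is essentially the paper's own proof: the symmetric choice $\phi_2(x,y)=\tfrac12\int_a^x\psi(t,y)\,dt$, $\phi_1(x,y)=-\tfrac12\int_a^y\psi(x,s)\,ds$ is exactly the paper's definition (the paper merely allows adding arbitrary measurable functions $\psi_1(x)$, $\psi_2(y)$ of a single variable), and the Lipschitz bound, the a.e.\ differentiation, and the one-line converse proceed identically. Your extra care with joint measurability and the Fubini promotion of the slice-wise a.e.\ identity to an a.e.\ identity on the square fills in details the paper leaves implicit.
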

\begin{proof}
Let $\psi \in L^{\infty}([a,b]^2)$ and $\phi_1$, $\phi_2$ be defined as:
\begin{equation}\label{phipsieq}
\begin{split}
&\phi_1(x,y) = -\frac{1}{2} \int\limits_a^y \psi(x,t)dt + \psi_1(x)
= \widetilde{\phi}_1(x,y) + \psi_1(x)~~\text{and}\\
&\phi_2(x,y) = \frac{1}{2} \int\limits_a^x \psi(t,y)dt + \psi_2(y)
= \widetilde{\phi}_2(x,y) + \psi_2(y),
\end{split}
\end{equation}
where $\psi_1$, $\psi_2$ are two measurable functions on $[a,b].$ 
Thus from the definition of $\phi_1$ and $\phi_2$ it follows that $\phi_1, \phi_2$ are two 
measurable functions, differentiable with respect to the second and first variable respectively with 
$\frac{\partial \phi_1}{\partial y}(x,y) = -\frac{1}{2} \psi(x,y)$ and 
$\frac{\partial \phi_2}{\partial x}(x,y) = \frac{1}{2} \psi(x,y)$
almost everywhere and hence $\frac{\partial \phi_2}{\partial x}(x,y) 
- \frac{\partial \phi_1}{\partial y}(x,y) = \psi(x,y).$
Moreover, $\left|\frac{\partial \phi_1}{\partial y}(x,y) \right|, 
\left|\frac{\partial \phi_2}{\partial x}(x,y)\right| \leq \frac{1}{2} \|\psi\|_{\infty}.$
Again from the definition of $\phi_1,$ we conclude that $\phi_1(x,y_1) 
- \phi_1(x,y_2) = \int \limits_{y_2}^{y_1} \psi(x,t)dt,$ which
implies that $\left|\phi_1(x,y_1) - \phi_1(x,y_2)\right|\leq \|\psi\|_{\infty} |y_1-y_2|$ 
and hence $\phi_1$ is Lipschitz 
in second variable uniformly with respect to the first variable.
By repeating the above argument we conclude that $\phi_2$ 
is also Lipschitz in first variable uniformly with respect to the second variable.
\vspace{0.1in}

Converse part follows from the hypotheses.

\end{proof}

The following theorem tells us about the trace formula for two variables in finite dimension.

\begin{thm}\label{mainsec2}
Let $P$ and $Q$ be two finite dimensional projections in $\mathcal{H}$ and let $\left(H_1^0,H_2^0\right)$
and $\left(H_1,H_2\right)$ be two commuting
pairs of self-adjoint operators acting in the reducing subspaces $P\mathcal{H}$
and $Q\mathcal{H}$ respectively. Let
$$\psi \in L^{\infty}([a,b]^2),$$ 
where
$$\sigma\left(H_1\right),
\sigma \left(H_2\right),
\sigma\left(H_1^0\right), \sigma\left(H_2^0\right)
\subseteq [a,b].$$ Then
\begin{equation}
\begin{split}
\hspace{0cm} \mathcal{I} \equiv\textup{Tr}\{\int_{H_1^0}^{H_1} P\phi_1\left(x,H_2^0\right)Q dx + 
\int_{H_2^0}^{H_2} Q\phi_2\left(H_1,y\right)Pdy \\
& \hspace{-5.5cm} + \int_{H_1}^{H_1^0} P \phi_1\left(x,H_2\right)Q dx
+ \int_{H_2}^{H_2^0}Q \phi_2\left(H_1^0,y\right)P dy\} \\
& \hspace{-9.4cm} = \textup{Tr}\{\int_{H_1^0}^{H_1} P\left[\phi_1\left(x,H_2^0\right)
- \phi_1\left(x,H_2\right)\right]Q dx \\
&\hspace{-4.5cm} + \int_{H_2^0}^{H_2} Q\left[\phi_2\left(H_1,y\right)
- \phi_2\left(H_1^0,y\right)\right]P dy\}\\
& \hspace{-9.4cm} = \int_a^b \int_a^b \left[\frac{\partial \phi_2}{\partial x}(x,y) -
\frac{\partial \phi_1}{\partial y}(x,y)\right]\xi (x,y) dx dy
= \int_a^b \int_a^b \psi(x,y) \xi (x,y) dx dy,
\end{split}
\end{equation}
where 
\begin{equation*} 
\xi (x,y) =  \textup{Tr}\{Q\left[E_{H_1}(x)  - E_{H_1^0}(x)\right]
P\left[E_{H_2}(y) - E_{H_2^0}(y)\right]Q\}
\end{equation*}
and $E_{H_1}(.)$, 
$ E_{H_2}(.)$, $ E_{H_1^0}(.)$, $E_{H_2^0}(.)$ are the spectral measures of the 
operators $H_1$, $H_2$, $H_1^0$, $H_2^0$ respectively and 
$\phi_1$, $\phi_2$ are same as in \eqref{phipsieq}.
\end{thm}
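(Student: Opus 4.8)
The plan is to read the four‑integral expression on the left as a contour integral of the operator‑valued one‑form $\phi_1\,dx+\phi_2\,dy$ around the rectangle whose edges are $H_1^0\to H_1$ (at height $H_2^0$), $H_2^0\to H_2$ (at abscissa $H_1$), $H_1\to H_1^0$ (at height $H_2$) and $H_2\to H_2^0$ (at abscissa $H_1^0$), and to prove the asserted identity as a finite‑dimensional, trace‑and‑spectral‑measure version of Green's theorem. Throughout, every operator in sight carries a factor $P$ or $Q$ and is therefore finite‑rank, so all traces are defined and cyclic and the trace may be interchanged with each of the (Bochner or Riemann--Stieltjes) integrals.

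First, the last equality is immediate: by Lemma \ref{mainsec1}, $\psi=\partial\phi_2/\partial x-\partial\phi_1/\partial y$, so the two double integrals coincide. The first equality is pure bookkeeping: the definition in Lemma \ref{spi2} gives $\int_B^A=-\int_A^B$, whence $\int_{H_1}^{H_1^0}P\phi_1(x,H_2)Q\,dx=-\int_{H_1^0}^{H_1}P\phi_1(x,H_2)Q\,dx$ and $\int_{H_2}^{H_2^0}Q\phi_2(H_1^0,y)P\,dy=-\int_{H_2^0}^{H_2}Q\phi_2(H_1^0,y)P\,dy$; adding these to the first and second integrals produces exactly the two bracketed differences. (Existence of all four integrals follows from Lemmas \ref{spi1} and \ref{spi2}, since $\alpha\mapsto\int_a^\alpha P\phi_1(x,H_2^0)Q\,dx$ and its companions are Lipschitz, hence H\"older of index $1>\tfrac12$.)

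The substance is the middle equality, and I would obtain it by a double integration by parts on each of the two terms. Write $T_1=\textup{Tr}\int_{H_1^0}^{H_1}P[\phi_1(x,H_2^0)-\phi_1(x,H_2)]Q\,dx$. Unfolding the definition of $\int_{H_1^0}^{H_1}$ and integrating by parts in the spectral variable $\alpha$ (here $\Psi(\alpha)=\int_a^\alpha(\cdots)dx$ is Lipschitz and $E_{H_1}-E_{H_1^0}$ is a finite step measure, with $E_{H_1}(b)=Q$, $E_{H_1^0}(b)=P$) turns $T_1$ into a boundary term $\textup{Tr}\{\Psi(b)(Q-P)\}$ minus $\textup{Tr}\int_a^b[E_{H_1}(\alpha)-E_{H_1^0}(\alpha)]P[\phi_1(\alpha,H_2^0)-\phi_1(\alpha,H_2)]Q\,d\alpha$. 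Then, writing $\phi_1(\alpha,H_2^0)-\phi_1(\alpha,H_2)=\int\phi_1(\alpha,y)\,d[E_{H_2^0}-E_{H_2}](y)$ and integrating by parts once more in $y$ — the step that creates the derivative — produces the factor $\partial\phi_1/\partial y$ paired against $E_{H_2^0}(y)-E_{H_2}(y)$, plus a second boundary term. Cyclicity of the trace then identifies $\textup{Tr}\{[E_{H_1}(\alpha)-E_{H_1^0}(\alpha)]P[E_{H_2}(y)-E_{H_2^0}(y)]Q\}=\xi(\alpha,y)$, and one reads off $T_1=-\int_a^b\int_a^b(\partial\phi_1/\partial y)(x,y)\,\xi(x,y)\,dx\,dy$. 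The companion term is handled by the mirror‑image computation (integrate by parts first in the $H_2$‑spectral variable, then in $x$), giving $T_2=+\int_a^b\int_a^b(\partial\phi_2/\partial x)(x,y)\,\xi(x,y)\,dx\,dy$; summing $T_1+T_2$ yields the middle expression.

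The only thing that could go wrong is the accumulation of the boundary (corner) terms from the two integrations by parts, and this is the point I would check most carefully — though it turns out to be benign. After the appropriate cyclic rearrangement each such term has the form $\textup{Tr}\{\cdots P(P-Q)Q\cdots\}$ or $\textup{Tr}\{\cdots Q(Q-P)P\cdots\}$, and these vanish identically because $P^2=P$ and $Q^2=Q$ force $P(P-Q)Q=PQ-PQ=0$ and $Q(Q-P)P=QP-QP=0$. Thus every boundary contribution drops out and only the two double integrals survive; this idempotency cancellation is the exact analogue of the cancellation of corner contributions in Green's theorem. What remains is routine: justifying the Riemann--Stieltjes integrations by parts against the step‑valued spectral measures (legitimate since all integrands are Lipschitz in the relevant variable, by Lemma \ref{mainsec1}) and the interchange of trace with the integrals (legitimate by finite‑rankness).
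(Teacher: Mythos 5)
Your proposal follows essentially the same route as the paper's own proof: unfold the symbol $\int_{H_1^0}^{H_1}$ via Lemma \ref{spi2}, integrate by parts under the trace once in each spectral variable, argue that the boundary terms vanish, identify $\xi$ by cyclicity of the trace, and finish with Fubini and Lemma \ref{mainsec1}; your signs and intermediate identities agree with the paper's \eqref{fin1} and \eqref{fin2}, and your treatment of the first and last equalities is the same bookkeeping the paper leaves implicit.

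There is, however, one concrete gap in the step you yourself flagged as the delicate one. Your claim that \emph{every} boundary contribution has the form $\textup{Tr}\{\cdots P(P-Q)Q\cdots\}$ or $\textup{Tr}\{\cdots Q(Q-P)P\cdots\}$ is true only of the upper-limit terms, where indeed $E_{H_1}(b)=Q$, $E_{H_1^0}(b)=P$, $E_{H_2}(b)=Q$, $E_{H_2^0}(b)=P$ and idempotency kills everything. The lower-limit terms are of a different nature: at $\alpha=a$ your term vanishes simply because $\Psi(a)=\int_a^a(\cdots)\,dx=0$, while at $y=a$ the term is $\phi_1(\alpha,a)\,P\left[E_{H_2^0}(a)-E_{H_2}(a)\right]Q$, where $E_{H_2^0}(a)$ and $E_{H_2}(a)$ are the eigenprojections at the endpoint $a$ (perfectly possible to be nonzero for finite-rank operators with spectra in $[a,b]$), not $P$ and $Q$; idempotency does not annihilate $P\left[E_{H_2^0}(\{a\})-E_{H_2}(\{a\})\right]Q$. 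Moreover, because you work with $\phi_1$ rather than $\widetilde{\phi}_1$, the scalar factor is $\phi_1(\alpha,a)=\psi_1(\alpha)$, an arbitrary measurable function from \eqref{phipsieq}. This is exactly why the paper's proof begins by replacing $\phi_j$ with $\widetilde{\phi}_j$ --- legitimate because $P\left[\phi_1(x,H_2^0)-\phi_1(x,H_2)\right]Q=P\left[\widetilde{\phi}_1(x,H_2^0)-\widetilde{\phi}_1(x,H_2)\right]Q$, the $\psi_1(x)$ parts cancelling by the same $PQ-PQ=0$ trick you use elsewhere --- after which $\widetilde{\phi}_1(\alpha,a)=-\tfrac12\int_a^a\psi(\alpha,t)\,dt=0$ makes the lower boundary term vanish identically. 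To repair your version, either perform this reduction first, or state explicitly that the lower Stieltjes boundary evaluation is $E(a^-)=0$ (equivalently, that the atom at $a$ missed by the Riemann--Stieltjes sums over $(a,b]$ cancels exactly against the naive lower boundary term); without one of these remarks your computation leaves a spurious term involving $\psi_1(\alpha)$ paired with the eigenprojections at $a$.
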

\begin{proof}
Let  $\psi \in L^{\infty}([a,b]^2)$ and $\phi_1$, $\phi_2$ be defined as in \eqref{phipsieq}.
We note that
\begin{equation*}
 \begin{split}
& \phi_1\left(x,H_2^0\right)
- \phi_1\left(x,H_2\right) = \widetilde{\phi}_1\left(x,H_2^0\right)
- \widetilde{\phi}_1\left(x,H_2\right),\\
& \phi_2\left(H_1,y\right)
- \phi_2\left(H_1^0,y\right) = \widetilde{\phi}_2\left(H_1,y\right)
- \widetilde{\phi}_2\left(H_1^0,y\right);~~\text{and that}
\end{split}
\end{equation*}
$$ \frac{\partial \phi_2}{\partial x}(x,y) - \frac{\partial \phi_1}{\partial y}(x,y) 
= \frac{\partial \widetilde{\phi}_2}{\partial x}(x,y) - \frac{\partial \widetilde{\phi}_1}{\partial y}(x,y)
= \psi(x,y)~~\text{almost everywhere}.$$
By Lemma \ref{mainsec1}, $\widetilde{\phi}_1(x,.)$ is Lipschitz uniformly
with respect to $x$ and $\widetilde{\phi}_2(.,y)$ is Lipschitz uniformly with respect to $y$ and
we note that
by Lemma \ref{spi1}, the integrals
\begin{equation*}
\begin{split}
\hspace{0.1cm}  \int_{H_1^0}^{H_1} P\left[\phi_1\left(x,H_2^0\right)
- \phi_1\left(x,H_2\right)\right]Q~dx~~\text{and}  \\
& \hspace{-11.3cm} \int_{H_2^0}^{H_2} Q\left[\phi_2\left(H_1,y\right)
- \phi_2\left(H_1^0,y\right)\right]P~dy~~\text{exist in} ~~\mathcal{B}(\mathcal{H}).
\end{split}
\end{equation*}
Thus
\begin{equation}
\begin{split}
\hspace{-1.5cm} \textup{Tr} \{\int_{H_1^0}^{H_1} P\left[\phi_1\left(x,H_2^0\right)
- \phi_1\left(x,H_2\right)\right]Q~dx\} 
= \textup{Tr} \{ \int_{H_1^0}^{H_1} P
\left[\widetilde{\phi}_1\left(x,H_2^0\right)
- \widetilde{\phi}_1\left(x,H_2\right)\right]Q~dx\} \\
& \hspace{-15cm} = \textup{Tr}\{ \int\limits_a^b \left(\int\limits_a^{\alpha} P\left[\widetilde{\phi}_1
\left(x,H_2^0\right)
- \widetilde{\phi}_1\left(x,H_2\right)\right]Q~dx\right)\left[E_{H_1}(d\alpha) 
- E_{H_1^0}(d\alpha)\right]\}.
\end{split}
\end{equation}
\begin{equation*}
\begin{split}
& \hspace{-0.2cm} = \textup{Tr}\{ \int\limits_a^b \left(\int\limits_a^{\alpha} P
\left[\widetilde{\phi}_1\left(x,H_2^0\right)
- \widetilde{\phi}_1\left(x,H_2\right)\right]Q~dx\right)Q\left[E_{H_1}(d\alpha) 
- E_{H_1^0}(d\alpha)\right]P\}\\
\end{split}
\end{equation*}
\begin{equation*}
\begin{split}
& \hspace{-0.4cm} = \textup{Tr}\{\left(\int\limits_a^{\alpha} P
\left[\widetilde{\phi}_1\left(x,H_2^0\right)
- \widetilde{\phi}_1\left(x,H_2\right)\right]Q~dx\right)Q[E_{H_1}(\alpha)
 - E_{H_1^0}(\alpha)]P|_{\alpha=a}^b\\
& \hspace{2cm} - \int\limits_a^b P\left[\widetilde{\phi}_1\left(\alpha,H_2^0\right)
- \widetilde{\phi}_1\left(\alpha,H_2\right)\right]Q\left[E_{H_1}(\alpha)
 - E_{H_1^0}(\alpha)\right] Pd\alpha \}\\
\end{split}
\end{equation*}
\begin{equation*}
\begin{split}
& \hspace{-1.6cm} = - \textup{Tr}\{ \int\limits_a^b P \left[\widetilde{\phi}_1\left(\alpha,H_2^0\right)
-  \widetilde{\phi}_1\left(\alpha,H_2\right)\right]Q\left[E_{H_1}(\alpha)
 - E_{H_1^0}(\alpha)\right]P d\alpha \},
\end{split}
\end{equation*}
where we have integrated by-parts and used the fact that the boundary terms vanish since 
$\int\limits_a^b P \left[\widetilde{\phi}_1\left(x,H_2^0\right)
-  \widetilde{\phi}_1\left(x,H_2\right)\right]Qdx\in \mathcal{B}(\mathcal{H})$ as a Bochner integral.
Furthermore by integrating by parts in the $\beta$-integral and noting that $\widetilde{\phi}_1(\alpha,.)$
is continuous for almost all $\alpha$ fixed, the above expression is equal to
\begin{equation*}
\begin{split}
& \hspace{-0.2cm} = -\textup{Tr} \{\int\limits_a^b d\alpha \left(\int\limits_a^b \widetilde{\phi}_1(\alpha,\beta)
P\left[E_{H_2^0}(d\beta) -  E_{H_2}(d\beta)\right]Q\right)\left[E_{H_1}(\alpha)
 - E_{H_1^0}(\alpha)\right]P\}\\
\end{split}
\end{equation*}
\begin{equation*}
\begin{split}
& \hspace{0cm} = -\textup{Tr} \int\limits_a^b d\alpha ~\{ ~\widetilde{\phi}_1(\alpha,\beta)
P\left[E_{H_2^0}(\beta) -  E_{H_2}(\beta)\right]Q|_{\beta =a}^b\\
& \hspace{2.5cm} - \int\limits_a^b \frac{\partial\widetilde{\phi}_1}{\partial \beta}(\alpha,\beta)
P\left[E_{H_2^0}(\beta) - E_{H_2}(\beta)\right]Q ~d\beta\}~
Q~\left[E_{H_1}(\alpha) - E_{H_1^0}(\alpha)\right]P\\
\end{split}
\end{equation*}
\begin{equation*}
\begin{split}
& \hspace{-0.7cm} = -\textup{Tr} \{ \int\limits_a^b \int\limits_a^b \frac{\partial\widetilde{\phi}_1}
{\partial \beta}(\alpha,\beta)
~P\left[E_{H_2}(\beta) - E_{H_2^0}(\beta)\right]Q
\left[E_{H_1}(\alpha) - E_{H_1^0}(\alpha)\right]P d\alpha d\beta\}\\
& \hspace{-0.7cm} = -\int\limits_a^b \int\limits_a^b \frac{\partial\widetilde{\phi}_1}{\partial \beta}(\alpha,\beta)
~\textup{Tr}\{ Q\left[E_{H_1}(\alpha) - E_{H_1^0}(\alpha)\right]
P\left[E_{H_2}(\beta) - E_{H_2^0}(\beta)\right]Q\} ~d\alpha d\beta,\\
\end{split}
\end{equation*}
since by Fubini's theorem, the iterated integral is equal to the double integral in this case.
Thus we have
\begin{equation}\label{fin1}
\begin{split}
\textup{Tr} \{\int_{H_1^0}^{H_1} P\left[\phi_1\left(x,H_2^0\right)
- \phi_1\left(x,H_2\right)\right]Q~dx\} \\
& \hspace{-8cm} =  -\int\limits_a^b \int\limits_a^b \frac{\partial\widetilde{\phi}_1}{\partial y}(x,y)
~\textup{Tr}\{ Q\left[E_{H_1}(x) - E_{H_1^0}(x)\right]
P\left[E_{H_2}(y) - E_{H_2^0}(y)\right]Q\} ~dx dy.
\end{split}
\end{equation}
By an identical set of computations as above, we have that
\begin{equation}\label{fin2}
\begin{split}
\textup{Tr} \{\int_{H_2^0}^{H_2} Q\left[\phi_2\left(H_1,y\right)
- \phi_2\left(H_1^0,y\right)\right]P~dy\} \\
&\hspace{-8cm} = \textup{Tr} \{\int_a^b \left(\int_a^{\alpha} 
Q\left[\widetilde{\phi}_2\left(H_1,y\right)
- \widetilde{\phi}_2\left(H_1^0,y\right)\right]P~dy\right)
\left[E_{H_2}(d\alpha)-E_{H_2^0}(d\alpha)\right]\}\\
& \hspace{-8cm} =   \int\limits_a^b \int\limits_a^b \frac{\partial\widetilde{\phi}_2}{\partial x}(x,y)
~\textup{Tr}\{ Q\left[E_{H_1}(x) - E_{H_1^0}(x)\right]
P\left[E_{H_2}(y) - E_{H_2^0}(y)\right]Q\} ~dx dy.
\end{split}
\end{equation}
Combining \eqref{fin1} and \eqref{fin2}, we get
\begin{equation*}
\begin{split}
\mathcal{I}\equiv \textup{Tr}\{\int_{H_1^0}^{H_1} P\left[\phi_1\left(x,H_2^0\right)
- \phi_1\left(x,H_2\right)\right]Q dx \\
&\hspace{-4cm} + \int_{H_2^0}^{H_2} Q\left[\phi_2\left(H_1,y\right)
- \phi_2\left(H_1^0,y\right)\right]Pdy\}\\
& \hspace{-7.7cm} = \int\limits_a^b \int\limits_a^b \left[\frac{\partial \widetilde{\phi}_2}{\partial x}(x,y) -
\frac{\partial \widetilde{\phi}_1}{\partial y}(x,y)\right]\xi(x,y) dx dy
= \int\limits_a^b \int\limits_a^b \psi(x,y) \xi(x,y) dx dy, 
\end{split}
\end{equation*}
where $\xi(x,y) = \textup{Tr}\{Q\left[E_{H_1}(x)  - E_{H_1^0}(x)\right]
P\left[E_{H_2}(y) - E_{H_2^0}(y)\right]Q\}.$
\end{proof}

The following theorem 
finds another formula for the above Stokes-like expression $\mathcal{I}$
of operator functions under trace in terms of
divided differences, which is useful to control the measure generated by $\xi$.  
\begin{thm}\label{dividthm}
Under the hypotheses of Theorem \ref{mainsec2},
\begin{equation*}
\mathcal{I} 
 = \int\limits_{[a,b]^2} \int\limits_{[a,b]^2} \frac{\int\limits_{x_2}^{x_1}
\int\limits_{y_2}^{y_1} \psi(x,y)dx dy}{(x_1-x_2)(y_1-y_2)}
~\left\langle \left(H_1-H_1^0\right), PE_{\underline{H}^0}(dx_2\times dy_1)
\left(H_2-H_2^0\right)E_{\underline{H}}(dx_1\times dy_2)Q\right\rangle_2,
\end{equation*}
where $\underline{H}^0 = (H_1^0,H_2^0)$, $\underline{H} = (H_1,H_2)$ 
and $E_{\underline{H}^0}(.)$ and $E_{\underline{H}}(.)$ are the spectral measures of the 
operators tuples $\underline{H}^0$ and $\underline{H}$ respectively on the Borel
sets of $[a,b]^2$ and where $\left\langle .,.\right\rangle_2$ denotes the inner product 
of the Hilbert space $\mathcal{B}_2(\mathcal{H})$.
\end{thm}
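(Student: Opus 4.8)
The plan is to start from the conclusion of Theorem \ref{mainsec2}, namely $\mathcal{I} = \int_a^b\int_a^b \psi(x,y)\,\xi(x,y)\,dx\,dy$ with $\xi(x,y) = \textup{Tr}\{Q[E_{H_1}(x)-E_{H_1^0}(x)]P[E_{H_2}(y)-E_{H_2^0}(y)]Q\}$, and to rewrite the two differences of spectral projections so as to expose the perturbations $V_1 = H_1-H_1^0$ and $V_2 = H_2-H_2^0$ together with a pair of divided differences. Writing $\chi_t = \chi_{(-\infty,t]}$, the Birman--Solomyak double operator integral identity gives $E_{H_1}(x)-E_{H_1^0}(x) = \iint \frac{\chi_x(x_1)-\chi_x(x_2)}{x_1-x_2}\,E_{H_1}(dx_1)\,V_1\,E_{H_1^0}(dx_2)$. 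For the second factor I would use the \emph{reversed} orientation (equivalently, apply the same identity to $E_{H_2^0}(y)-E_{H_2}(y)$ and absorb the sign into $V_2$), obtaining $E_{H_2}(y)-E_{H_2^0}(y) = \iint \frac{\chi_y(y_1)-\chi_y(y_2)}{y_1-y_2}\,E_{H_2^0}(dy_1)\,V_2\,E_{H_2}(dy_2)$. This mixed choice of orientations is exactly what produces the cross pattern $(x_2,y_1)$ for $\underline{H}^0$ and $(x_1,y_2)$ for $\underline{H}$ in the statement. Since $P$ and $Q$ are finite rank, both identities are finite sums over the (joint) eigenvalues, so no convergence question arises.

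Substituting both representations into $\xi$ and then into $\mathcal{I}$, and using Fubini to carry out the $x$- and $y$-integrations first, the inner integral factorizes through $\psi$: for fixed $x_1,x_2,y_1,y_2$ one finds $\int_a^b\int_a^b \psi(x,y)\,\frac{\chi_x(x_1)-\chi_x(x_2)}{x_1-x_2}\,\frac{\chi_y(y_1)-\chi_y(y_2)}{y_1-y_2}\,dx\,dy = \frac{\int_{x_2}^{x_1}\int_{y_2}^{y_1}\psi(x,y)\,dx\,dy}{(x_1-x_2)(y_1-y_2)}$, because as a function of $x$ the first kernel equals $(x_1-x_2)^{-1}[\chi_{\{x\ge x_1\}}-\chi_{\{x\ge x_2\}}]$, supported on the interval between $x_1$ and $x_2$, and likewise for the $y$-kernel. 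This is precisely the double divided difference appearing in the statement, and since flipping a limit of integration flips both a numerator integral and its denominator, the identity is orientation independent and holds for every ordering of the eigenvalues.

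It remains to identify the operator part. After the substitution the integrand carries the trace $\textup{Tr}\{Q\,E_{H_1}(dx_1)V_1 E_{H_1^0}(dx_2)\,P\,E_{H_2^0}(dy_1)V_2 E_{H_2}(dy_2)\,Q\}$. Cycling $V_1$ to the front, using $P^2=P$, $Q^2=Q$, the fact that $E_{H_1^0},E_{H_2^0}$ have range in $P\mathcal{H}$ and commute with each other and with $P$, while $E_{H_1},E_{H_2}$ have range in $Q\mathcal{H}$ and commute with each other and with $Q$, one regroups this as $\textup{Tr}\{V_1\,P\,E_{H_1^0}(dx_2)E_{H_2^0}(dy_1)\,V_2\,E_{H_1}(dx_1)E_{H_2}(dy_2)\,Q\}$. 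The products of commuting spectral measures are exactly the joint spectral measures $E_{\underline{H}^0}(dx_2\times dy_1)$ and $E_{\underline{H}}(dx_1\times dy_2)$, and since $V_1^{*}=V_1$ this trace equals $\langle V_1, P E_{\underline{H}^0}(dx_2\times dy_1) V_2 E_{\underline{H}}(dx_1\times dy_2)Q\rangle_2$. Assembling the kernel from the previous paragraph against this inner product over the four spectral variables yields the asserted formula.

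The main technical point is the use of the double operator integral for the \emph{discontinuous} functions $\chi_x,\chi_y$: the divided difference of a step function is singular on the diagonal $x_1=x_2$ (respectively $y_1=y_2$), i.e.\ precisely where $H_1$ and $H_1^0$ (respectively $H_2$ and $H_2^0$) share an eigenvalue. This is where care is needed, and the resolution is that the singular factor is never used bare: it is first integrated against $\psi\,dx$, producing the average $(x_1-x_2)^{-1}\int_{x_2}^{x_1}\psi$, which is bounded by $\|\psi\|_{\infty}$ and extends continuously across the diagonal. One makes this rigorous either by approximating $\chi_x,\chi_y$ by Lipschitz functions and passing to the limit (legitimate since every sum is finite), or by computing the finite-dimensional sums directly from the spectral decompositions of the two commuting pairs. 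The only remaining items to watch are the orientation and sign bookkeeping in the two divided differences and the Fubini interchange, both harmless here because all sums are finite and $\psi$ is bounded.
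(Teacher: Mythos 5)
Your proof is correct, but it follows a genuinely different route from the paper's. The paper does not use the scalar formula $\mathcal{I}=\int\int\psi\,\xi\,dx\,dy$ at all: it returns to the operator expression and applies the Birman--Solomyak double-operator-integral identity twice to \emph{Lipschitz} symbols --- first to $\widetilde{\phi}_1(x,\cdot)$ and $\widetilde{\phi}_2(\cdot,y)$ in the perturbed variable, then, after pulling the constant $\mathcal{B}_2(\mathcal{H})$-valued factor out, to the primitive $\alpha\mapsto\int_a^\alpha(\cdots)\,dx$ in the remaining variable --- treating the two summands of $\mathcal{I}$ separately, each yielding exactly one half of the final expression (this is where the factors $\mp\tfrac12$ in \eqref{phipsieq} get used). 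You instead start from the conclusion of Theorem \ref{mainsec2} and apply the double-operator-integral identity to the \emph{indicator} functions $\chi_x,\chi_y$, i.e.\ to the spectral projections themselves, integrating against $\psi$ only afterwards. Both arguments rest on the same finite-dimensional kernel $E_{H_1}(\{\lambda\})V_1E_{H_1^0}(\{\mu\})=(\lambda-\mu)E_{H_1}(\{\lambda\})E_{H_1^0}(\{\mu\})$ plus trace cyclicity and commutativity of each pair; yours is shorter because it reuses Theorem \ref{mainsec2}, while the paper's version never meets a discontinuous symbol and so needs no discussion of the diagonal. On that point you are more cautious than necessary: in finite dimensions the diagonal terms $\lambda=\mu$ of your sums carry the numerator $\chi_x(\lambda)-\chi_x(\lambda)=0$ and drop out before any division occurs, so your identity for indicators is exact and the Lipschitz-approximation fallback you offer is superfluous. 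Two bookkeeping remarks. First, your displayed identities for the bare differences $E_{H_1}(x)-E_{H_1^0}(x)$ are literally valid only after compression, i.e.\ for $Q\left[E_{H_1}(x)-E_{H_1^0}(x)\right]P$ and $P\left[E_{H_2}(y)-E_{H_2^0}(y)\right]Q$ (the uncompressed differences contain extra pieces through $I-P$ and $I-Q$); this is harmless because $\xi$ contains precisely these compressions. Second, each single kernel integrates to \emph{minus} a divided difference, $\int_a^b\psi(x,\cdot)\,\frac{\chi_x(x_1)-\chi_x(x_2)}{x_1-x_2}\,dx=-\,\frac{\int_{x_2}^{x_1}\psi(x,\cdot)\,dx}{x_1-x_2}$, and your final formula comes out right only because the two minus signs cancel; this deserves an explicit line in the write-up, since in the one-variable Krein setting the analogous sign does not cancel (there the shift function is $\textup{Tr}\left[E_{H_0}(\cdot)-E_{H}(\cdot)\right]$, unperturbed minus perturbed).
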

\begin{proof}
In finite dimensional space $\mathcal{H}$, using the ideas of double spectral integrals
(\cite{birsolo1},\cite{birsolo2}), we get that

\begin{equation*}
\begin{split}
 P\left[\widetilde{\phi}_1(x,H_2^0) - \widetilde{\phi}_1(x,H_2)\right]Q
 = \int\limits_a^b \left[ \widetilde{\phi}_1(x,y_1)
 - \widetilde{\phi}_1(x,y_2)\right] PE_{H_2^0}(dy_1) 
 E_{H_2}(dy_2)Q\\
 & \hspace{-12cm} = -\int\limits_a^b \int\limits_a^b ~\frac{\widetilde{\phi}_1(x,y_1) - 
 \widetilde{\phi}_1(x,y_2)}{y_1-y_2}~PE_{H_2^0}(dy_1)\left(H_2-H_2^0\right) E_{H_2}(dy_2)Q\\
 & \hspace{-12cm} = \frac{1}{2} \int\limits_{[a,b]^2} ~~\frac{\int\limits_{y_2}^{y_1} 
 \psi(x,t)dt}{y_1-y_2}~PE_{H_2^0}(dy_1)\left(H_2-H_2^0\right) E_{H_2}(dy_2)Q.
\end{split}
\end{equation*}
Therefore 
\begin{equation*}
\begin{split}
&\int\limits_a^b \left(\int\limits_a^{\alpha} P\left[\widetilde{\phi}_1(x,H_2^0) - 
\widetilde{\phi}_1(x,H_2)\right]Qdx\right)\left[E_{H_1}(d\alpha)
- E_{H_1^0}(d\alpha)\right]\\
&= \frac{1}{2} \int\limits_a^b \left(\int\limits_a^{\alpha}
\left[\int\limits_{[a,b]^2} ~~\frac{\int\limits_{y_2}^{y_1} 
 \psi(x,t)dt}{y_1-y_2}~PE_{H_2^0}(dy_1)\left(H_2-H_2^0\right) E_{H_2}(dy_2)Q\right]dx\right)
 \left[E_{H_1}(d\alpha)
- E_{H_1^0}(d\alpha)\right]\\
& = \frac{1}{2}\left[ \int\limits_{[a,b]^2} ~PE_{H_2^0}(dy_1)\left(H_2-H_2^0\right) E_{H_2}(dy_2)Q
\right]\int\limits_a^b \int\limits_a^{\alpha} ~~\frac{\int\limits_{y_2}^{y_1} 
 \psi(x,t)dt}{y_1-y_2}dx \left[E_{H_1}(d\alpha)
- E_{H_1^0}(d\alpha)\right]
\end{split}
\end{equation*}

\begin{equation*}
\begin{split}
& = \frac{1}{2}\left[ \int\limits_{[a,b]^2} ~PE_{H_2^0}(dy_1)\left(H_2-H_2^0\right) E_{H_2}(dy_2)Q
\right]\int\limits_{[a,b]^2} \frac{\int\limits_{\beta}^{\alpha} ~~\frac{\int\limits_{y_2}^{y_1} 
 \psi(x,t)dt}{y_1-y_2}dx}{\alpha-\beta} \left[E_{H_1}(d\alpha)
\left(H_1-H_1^0\right)E_{H_1^0}(d\beta)\right],
\end{split}
\end{equation*}
where we have used Fubini's theorem to interchange the order of the integration and utilized 
the ideas of double spectral integrals
(\cite{birsolo1},\cite{birsolo2}). Thus by using trace properties, Fubini's theorem 
and the fact that $(H_1^0,H_2^0)$, $(H_1,H_2)$ are two commuting pairs of self-adjoint
operators, we conclude that

\begin{equation}\label{divid1}
\begin{split}
 &\textup{Tr} \{\int_{H_1^0}^{H_1} P\left[\phi_1\left(x,H_2^0\right)
- \phi_1\left(x,H_2\right)\right]Q~dx\}\\
& = \frac{1}{2} \textup{Tr}\{\left[ \int\limits_{(y_1,y_2)\in [a,b]^2} 
~PE_{H_2^0}(dy_1)\left(H_2-H_2^0\right) E_{H_2}(dy_2)Q
\right]\bullet\\
& \hspace{5cm} \int\limits_{(x_1,x_2)\in [a,b]^2}~~\frac{\int\limits_{x_2}^{x_1}\int\limits_{y_2}^{y_1} 
 \psi(x,t)dtdx}{(x_1-x_2)(y_1-y_2)}\left[E_{H_1}(dx_1)
\left(H_1-H_1^0\right)E_{H_1^0}(dx_2)\right]\}\\
\end{split}
\end{equation}
\vspace{0.03in}

\begin{equation*}
\begin{split}
& = \frac{1}{2} \int\limits_{[a,b]^2}\int\limits_{[a,b]^2}
\left[\frac{\int\limits_{x_2}^{x_1}\int\limits_{y_2}^{y_1} 
 \psi(x,y)dxdy}{(x_1-x_2)(y_1-y_2)}\right]\bullet\\
 & \hspace{4cm} \textup{Tr} \{PE_{H_2^0}(dy_1)\left(H_2-H_2^0\right) E_{H_2}(dy_2)Q
 E_{H_1}(dx_1)
\left(H_1-H_1^0\right)E_{H_1^0}(dx_2)\}\\
\end{split}
\end{equation*}
\begin{equation*}
\begin{split}
& = \frac{1}{2} \int\limits_{[a,b]^2}\int\limits_{[a,b]^2}
\left[\frac{\int\limits_{x_2}^{x_1}\int\limits_{y_2}^{y_1} 
 \psi(x,y)dxdy}{(x_1-x_2)(y_1-y_2)}\right]\bullet\\
 & \hspace{4cm} \textup{Tr} \{\left(H_1-H_1^0\right)PE_{H_1^0}(dx_2) E_{H_2^0}(dy_1)
 \left(H_2-H_2^0\right) E_{H_1}(dx_1) E_{H_2}(dy_2)Q\}\\
 \end{split}
\end{equation*}
\begin{equation*}
\begin{split}
 & = \frac{1}{2} \int\limits_{[a,b]^2}\int\limits_{[a,b]^2}
\left[\frac{\int\limits_{x_2}^{x_1}\int\limits_{y_2}^{y_1} 
 \psi(x,y)dxdy}{(x_1-x_2)(y_1-y_2)}\right]\bullet\\
 & \hspace{4cm} \left\langle\left(H_1-H_1^0\right), PE_{H_1^0}(dx_2) E_{H_2^0}(dy_1)
 \left(H_2-H_2^0\right) E_{H_1}(dx_1) E_{H_2}(dy_2)Q\right\rangle_2
\end{split}
\end{equation*}
\vspace{0.1in}

\begin{equation*}
\begin{split}
& = \frac{1}{2} \int\limits_{[a,b]^2}\int\limits_{[a,b]^2}
\left[\frac{\int\limits_{x_2}^{x_1}\int\limits_{y_2}^{y_1} 
 \psi(x,y)dxdy}{(x_1-x_2)(y_1-y_2)}\right]\bullet\\
 & \hspace{4cm} \left\langle \left(H_1-H_1^0\right), PE_{\underline{H}^0}(dx_2\times dy_1)
\left(H_2-H_2^0\right)E_{\underline{H}}(dx_1\times dy_2)Q\right\rangle_2,
\end{split}
\end{equation*}
where $E_{\underline{H}^0}(dx_2\times dy_1) = E_{H_1^0}(dx_2) E_{H_2^0}(dy_1)$ and 
$E_{\underline{H}}(dx_1\times dy_2) = E_{H_1}(dx_1) E_{H_2}(dy_2)$ are
$\mathcal{B}_2(\mathcal{H})$-valued spectral measure on $[a,b]^2$.
Similarly by an identical set of computations, we get that
\begin{equation}\label{divid2}
\begin{split}
&\textup{Tr} \{\int_{H_2^0}^{H_2} Q\left[\phi_2\left(H_1,y\right)
- \phi_2\left(H_1^0,y\right)\right]P~dy\}\\
& = \frac{1}{2} \int\limits_{[a,b]^2}\int\limits_{[a,b]^2}
\left[\frac{\int\limits_{x_2}^{x_1}\int\limits_{y_2}^{y_1} 
 \psi(x,y)dxdy}{(x_1-x_2)(y_1-y_2)}\right]\bullet\\
 & \hspace{4cm} \left\langle \left(H_1-H_1^0\right), PE_{\underline{H}^0}(dx_2\times dy_1)
\left(H_2-H_2^0\right)E_{\underline{H}}(dx_1\times dy_2)Q\right\rangle_2.
\end{split}
\end{equation}
The conclusion of the theorem follows by combining the equations \eqref{divid1} and \eqref{divid2}.

\end{proof}
The next theorem shows how Theorem \ref{th: WBG} can be used for the reduction to finite dimension. 
We deal here with the simpler case of $n=2.$ In the statement of the theorem below we apply Theorem 
\ref{th: WBG} to the pairs $(H_1^0,H_2^0)$ and $(H_1,H_2)$ to get two
commuting pairs of finite dimensional self-adjoint operators 
$\left(H_1^{0(N)},H_2^{0(N)}\right)$ and $\left(H_1^{(N)},H_2^{(N)}\right)$
in $P_N^0\mathcal{H}$ and $P_N\mathcal{H}$ respectively, such that 
\begin{equation}\label{ap1}
\left\|[H_j^0,P_N^0]\right\|_p,~\left\|P_N^0H_j^0P_N^0 - H_j^{0(N)}P_N^0\right\|_p \longrightarrow 0 ~\text{as}~
N\longrightarrow \infty ~\text{for}~ p\geq 2,~ j = 1,2~~\text{and} 
\end{equation}
\begin{equation}\label{ap2}
 \hspace{-0.6cm} \left\|[H_j,P_N]\right\|_p,~\left\|P_NH_jP_N - H_j^{(N)}P_N\right\|_p 
\longrightarrow 0 ~\text{as}~
N\longrightarrow \infty ~\text{for}~ p\geq 2,~ j = 1,2, 
\end{equation}
where $P_N^0$, $P_N$ are projections increasing to $I$ (i.e. $P_N^0, P_N \uparrow I$).

\begin{thm}\label{appth1}
Let $(H_1^0,H_2^0)$ and $(H_1,H_2)$ be two commuting pairs of bounded self-adjoint operators in a separable Hilbert
space $\mathcal{H}$ such that $H_j-H_j^0 \equiv V_j\in \mathcal{B}_2(\mathcal{H})$ 
and such that $\sigma(H_j)$, $\sigma(H_j^0)$ $\subseteq [a,b] $ for $j=1,2.$ Furthermore let
$$p_1(x,y) = \sum_{0\leq i+j\leq n} c(i,j) x^iy^j,~
p_2(x,y) = \sum_{0\leq r+s\leq m} d(r,s) x^ry^s$$ be two polynomials in 
$[a,b]^2$ with complex coefficients. Then 
\begin{equation}\label{fappthm}
\begin{split}
\hspace{-0.5cm} \textup{Tr} \{\int_{H_1^0}^{H_1} p_1(x,H_2^0) dx + \int_{H_2^0}^{H_2} p_2(H_1,y)dy \\
&\hspace{-3cm} + \int_{H_1}^{H_1^0} p_1(x,H_2)dx
+ \int_{H_2}^{H_2^0} p_2(H_1^0,y)dy\}\\
& \hspace{-6.5cm} = \lim_{N\longrightarrow \infty} \textup{Tr}\{\int_{H_1^{0(N)}}^{H_1^{(N)}} 
P_N^0\left[p_1\left(x,H_2^{0(N)}\right)-p_1\left(x,H_2^{(N)}\right)\right]P_N dx  \\
& \hspace{-2.5cm} + \int_{H_2^{0(N)}}^{H_2^{(N)}} P_N\left[p_2\left(H_1^{(N)},y\right)
- p_2\left(H_1^{0(N)},y\right)\right]P_N^0dy\}\\
& \hspace{-6.5cm} = \lim_{N\longrightarrow \infty}
\int\limits_a^b \int\limits_a^b \left[\frac{\partial p_2}{\partial x}(x,y) -
\frac{\partial p_1}{\partial y}(x,y)\right]\xi_N(x,y) dx dy,
\end{split}
\end{equation}
where $\xi_N(x,y) = \textup{Tr}\{P_N\left[E_{H_1^{(N)}}(x)  - E_{H_1^{0(N)}}(x)\right]
P_N^0\left[E_{H_2^{(N)}}(y) - E_{H_2^{0(N)}}(y)\right]P_N\}$
and $E_{H_1^{0(N)}}(.)$, 
$E_{H_2^{0(N)}}(.)$, $E_{H_1^{(N)}}(.)$, $E_{H_2^{(N)}}(.)$ are the spectral measures of the 
operators $H_1^{0(N)}$, $H_2^{0(N)}$, $H_1^{(N)}$, $H_2^{(N)}$ respectively
and $P_N^0$, $P_N$ are the projections obtained by applying Theorem 
\ref{th: WBG} to the pairs $(H_1^0,H_2^0)$ and $(H_1,H_2)$ respectively, as mentioned above.
\end{thm}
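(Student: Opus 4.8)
The plan is to read \eqref{fappthm} as two separate assertions: the equality of the last two displayed lines, which is an immediate application of the finite-dimensional trace formula, and the first equality, which is the genuine reduction to finite dimension and carries all of the analysis. For each fixed $N$ the commuting pairs $\bigl(H_1^{0(N)},H_2^{0(N)}\bigr)$ and $\bigl(H_1^{(N)},H_2^{(N)}\bigr)$ act on the finite-dimensional spaces $P_N^0\mathcal{H}$ and $P_N\mathcal{H}$, so Theorem \ref{mainsec2} applies with $P=P_N^0$, $Q=P_N$ and with the polynomials $p_1,p_2$ in the roles of $\phi_1,\phi_2$. Since $p_1,p_2$ are smooth they are Lipschitz in each variable uniformly in the other, and $\psi:=\frac{\partial p_2}{\partial x}-\frac{\partial p_1}{\partial y}\in L^{\infty}([a,b]^2)$ by the converse half of Lemma \ref{mainsec1}; thus Theorem \ref{mainsec2} yields exactly the equality of the two finite-dimensional expressions, with spectral function $\xi_N$, and letting $N\to\infty$ gives the last equality of \eqref{fappthm} independently of the first.

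For the first equality I would begin by collapsing the four integrals: by the definition in Lemma \ref{spi2} one has $\int_{H_1}^{H_1^0}=-\int_{H_1^0}^{H_1}$ and $\int_{H_2}^{H_2^0}=-\int_{H_2^0}^{H_2}$, so the left-hand side equals
$$\mathcal{I}^{\infty}=\textup{Tr}\Bigl\{\int_{H_1^0}^{H_1}\bigl[p_1(x,H_2^0)-p_1(x,H_2)\bigr]\,dx+\int_{H_2^0}^{H_2}\bigl[p_2(H_1,y)-p_2(H_1^0,y)\bigr]\,dy\Bigr\}.$$
Because $V_j:=H_j-H_j^0\in\mathcal{B}_2(\mathcal{H})$, each integrand is a polynomial in operators differing by a Hilbert--Schmidt perturbation and hence lies in $\mathcal{B}_2(\mathcal{H})$, while the integration by parts of Theorem \ref{mainsec2} supplies the second Hilbert--Schmidt factor $V_1$ (resp.\ $V_2$); the resulting products are trace class, so $\mathcal{I}^{\infty}$ is well defined. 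The computation proving Theorem \ref{dividthm} used nothing beyond this $\mathcal{B}_2$-membership to secure convergence of the relevant double spectral integrals, so it carries over verbatim with $P=Q=I$ to give
$$\mathcal{I}^{\infty}=\int_{[a,b]^2}\int_{[a,b]^2}\Phi(x_1,x_2,y_1,y_2)\,\bigl\langle V_1,\,E_{\underline{H}^0}(dx_2\times dy_1)\,V_2\,E_{\underline{H}}(dx_1\times dy_2)\bigr\rangle_2,$$
where $\Phi(x_1,x_2,y_1,y_2)=\dfrac{1}{(x_1-x_2)(y_1-y_2)}\int_{x_2}^{x_1}\int_{y_2}^{y_1}\psi(x,y)\,dx\,dy$, and the finite-dimensional $\mathcal{I}_N$ has the identical shape with $V_j^{(N)}:=H_j^{(N)}-H_j^{0(N)}$, the projections $P_N^0,P_N$ and the approximating spectral measures inserted.

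The decisive structural point is that, since $\psi$ is a polynomial, its iterated integral $\int_{x_2}^{x_1}\int_{y_2}^{y_1}\psi$ is a polynomial vanishing on $\{x_1=x_2\}$ and on $\{y_1=y_2\}$; being divisible by $(x_1-x_2)(y_1-y_2)$, the symbol $\Phi$ is itself a polynomial on $[a,b]^4$, bounded by $\|\psi\|_{\infty}$. Expanding $\Phi$ into monomials reduces each double spectral integral to a finite sum of inner products of the explicit form $\bigl\langle V_1,\,(H_1^0)^{b}(H_2^0)^{c}\,V_2\,(H_1)^{a}(H_2)^{d}\bigr\rangle_2$, so no general multiple-operator-integral machinery is required: it suffices to pass to the limit in each such term. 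From \eqref{ap1}, \eqref{ap2} (with $p=2$) together with $P_N^0,P_N\uparrow I$ one gets $H_j^{(N)}=H_jP_N+o_{\mathcal{B}_2}(1)$ and $H_j^{0(N)}=H_j^0P_N^0+o_{\mathcal{B}_2}(1)$, whence the monomial factors converge strongly and the cores split as $V_j^{(N)}=V_jP_N+H_j^0(P_N-P_N^0)+o_{\mathcal{B}_2}(1)$. Keeping only the pieces $V_jP_N$, which converge to $V_j$ in $\mathcal{B}_2$, and using that strong convergence of uniformly bounded factors against a fixed Hilbert--Schmidt operator is $\mathcal{B}_2$-continuous, one checks that this main contribution tends to $\mathcal{I}^{\infty}$.

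The hard part will be the remaining error terms, and this is the main obstacle of the proof. The genuine difficulty is that $V_j^{(N)}$ does \emph{not} converge to $V_j$ in $\mathcal{B}_2$: the two applications of Theorem \ref{th: WBG} produce \emph{different} projections $P_N\neq P_N^0$, and the remainder $H_j^0(P_N-P_N^0)$ need not be Hilbert--Schmidt small (indeed $\|V_j^{(N)}\|_2$ may diverge), so the convergence $\mathcal{I}_N\to\mathcal{I}^{\infty}$ cannot be obtained term by term and must come from cancellation inside the trace. To control every contribution carrying a factor $P_N-P_N^0$ I would exploit two facts: the commutator bounds $\|[H_j,P_N]\|_2$, $\|[H_j^0,P_N^0]\|_2\to 0$ from \eqref{ap1}, \eqref{ap2}, which by cyclicity of the trace let one move spectral projections past operators at the cost of a $\mathcal{B}_2$-small commutator; and the fact that, both constructions being built on a common orthonormal basis $\{f_j\}$, one has $\{f_1,\dots,f_N\}\subset P_N\mathcal{H}\cap P_N^0\mathcal{H}$, so that $P_N-P_N^0$ annihilates the first $N$ basis vectors. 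Combining these with the uniform bound $\|\Phi\|_{\infty}\le\|\psi\|_{\infty}$ to dominate the finitely many monomial contributions should force the error terms to zero and yield $\mathcal{I}_N\to\mathcal{I}^{\infty}$, completing the chain of equalities in \eqref{fappthm}.
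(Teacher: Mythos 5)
Your plan for the second equality of \eqref{fappthm} (apply Theorem \ref{mainsec2} in the finite-dimensional spaces with $P=P_N^0$, $Q=P_N$) is exactly what the paper does, and your reduction of the left-hand side to $\mathcal{I}^{\infty}$ with trace-class integrands is also correct. The gap is in the first equality: after your monomial expansion everything hinges on passing to the limit $N\to\infty$ in the finite-dimensional traces, and you explicitly leave this unproven, ending with the assertion that your tools ``should force the error terms to zero.'' The tools you name are not adequate as stated: cross terms carrying a factor $H_j^0(P_N-P_N^0)$ from \emph{each} core are traces of products of two operators that tend to zero only strongly while their $\mathcal{B}_2$ norms may diverge, and the observation that $P_N-P_N^0$ annihilates $f_1,\dots,f_N$ gives no norm control on such products (the natural estimate $\left\|(P_N-P_N^0)X_N(P_N-P_N^0)\right\|_1\leq \left\|P_N-P_N^0\right\|_2^2\left\|X_N\right\|$ is useless since $\left\|P_N-P_N^0\right\|_2$ can grow). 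So the decisive analytic step of the theorem is missing.

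In fact the obstacle you flag as ``the genuine difficulty'' is misdiagnosed, and recognizing this is what closes the gap. In every trace term of your expansion the cores never occur bare: $V_2^{(N)}$ is flanked by spectral measures of $\underline{H}^{0(N)}$ on one side and of $\underline{H}^{(N)}$ on the other, so only the compression $P_N^0V_2^{(N)}P_N$ (and cyclically $P_N(V_1^{(N)})^*P_N^0$) ever enters, and these compressions \emph{do} converge to $V_j$ in $\mathcal{B}_2$: writing $P_N^0H_j^0P_N^0P_N=P_N^0H_j^0P_N+P_N^0\left[H_j^0,P_N^0\right]P_N$, the dangerous piece $P_N^0H_j^0(P_N-P_N^0)P_N$ equals $-P_N^0\left[H_j^0,P_N^0\right]P_N$, which is $o_{\mathcal{B}_2}(1)$ by \eqref{ap1}. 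This is precisely Lemma \ref{plmmaappthm}$(v)$ of the paper; combined with strong convergence of the uniformly bounded power factors, it gives termwise convergence with no cancellation-inside-the-trace argument at all. The paper's own proof is simpler still: it never invokes Theorem \ref{dividthm} here (your claim that its proof ``carries over verbatim with $P=Q=I$'' is itself shaky, since that theorem is proved in finite dimensions where double spectral integrals are finite sums, though for polynomials one could justify it algebraically). Instead it integrates the polynomials explicitly, so each of the four integrals becomes a finite sum of products such as $\left[(H_2^0)^j-(H_2)^j\right]\left[(H_1)^{i+1}-(H_1^0)^{i+1}\right]$, each factor a fixed Hilbert--Schmidt operator, and then Lemma \ref{plmmaappthm}$(i)$--$(iv)$ — which converts powers of the approximants into compressions of powers via the commutator bounds \eqref{ap1}, \eqref{ap2} — yields \eqref{lastap} and hence the first equality.
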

We need a preliminary result for the proof of the above theorem.
\begin{lemma}\label{plmmaappthm}
 Let $H_j^0$, $H_j$, $H_j^{0(N)}$, $H_j^{(N)}$, $P_N^0$, $P_N$ be as above for $j=1,2$. Then 
 \vspace{0.1in}
 
 $(i)$ $\left\|P_N^0\left(H_j^0\right)^k-\left(P_N^0H_j^0P_N^0\right)^k\right\|_2$ and  
 $\left\|P_N\left(H_j\right)^k-\left(P_NH_jP_N\right)^k\right\|_2$ $\longrightarrow 0$
 as $N\longrightarrow \infty$ for $j=1,2$ and $k\geq 1$.
 \vspace{0.1in}
 
 $(ii)$ $\left\|\left(H_j^0\right)^kP_N^0-\left(P_N^0H_j^0P_N^0\right)^k\right\|_2$ and 
 $\left\|\left(H_j\right)^kP_N-\left(P_NH_jP_N\right)^k\right\|_2$ $\longrightarrow 0$
 as $N\longrightarrow \infty$ for $j=1,2$ and $k\geq 1$.
 \vspace{0.1in}
 
 $(iii)$ $\left\|P_N^0\left[\left(P_N^0H_j^0P_N^0\right)^k - 
 \left(P_NH_jP_N\right)^k\right]P_N\right\|_2$ is uniformly bounded in $N$ for $j=1,2$ and
 $k\geq 1$.
 \vspace{0.1in}
 
 $(iv)$ $\lim\limits_{N\longrightarrow \infty} \textup{Tr}\{P_N^0\left[\left(H_2^0\right)^k-
 \left(H_2\right)^k\right]P_N\left[\left(H_1\right)^l-\left(H_1^0\right)^l\right]P_N^0\}$
 
 $=$ $\lim\limits_{N\longrightarrow \infty} \textup{Tr}\{P_N^0\left[\left(H_2^{0(N)}\right)^k-
 \left(H_2^{(N)}\right)^k\right]P_N\left[\left(H_1^{(N)}\right)^l-\left(H_1^{0(N)}\right)^l\right]P_N^0\}$
 for $k, l\geq 1$.
 \vspace{0.1in}
 
 $(v)$ $\left\|P_N\left(H_j^{(N)}-H_j^{0(N)}\right)P_N^0
 - P_NV_jP_N^0\right\|_2\longrightarrow 0$ as $N\longrightarrow \infty$ for $j=1,2$.
 
\end{lemma}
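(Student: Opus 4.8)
The plan is to reduce all five assertions to two inputs: the commutator and compression estimates \eqref{ap1}--\eqref{ap2}, and the hypothesis $V_j = H_j-H_j^0\in\mathcal{B}_2(\mathcal{H})$, combined with the elementary bound $|\textup{Tr}(AB)|\le\|A\|_2\|B\|_2$ and repeated telescoping in the $\|\cdot\|_2$-norm. After the normalization $0\le A\le I$ of Theorem \ref{th: WBG} every operator in sight has operator norm at most a fixed constant $M$, which keeps the telescoping estimates uniform in $N$. The engine is $(i)$, which I would prove by induction on $k$ using the identity $PA-PAP=[P,A](I-P)$ (the case $k=1$), where $A$ denotes a generic one of $H_j,H_j^0$ and $P$ the matching projection. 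Writing $B:=PAP$ one has $B^{k-1}(I-P)=0$, hence $B^{k-1}A-B^k=B^{k-1}(PA-PAP)=B^{k-1}[P,A](I-P)$, so that $PA^k-B^k=(PA^{k-1}-B^{k-1})A+B^{k-1}[P,A](I-P)$ gives $\|PA^k-B^k\|_2\le M\,\|PA^{k-1}-B^{k-1}\|_2+M^{k-1}\|[A,P]\|_2$; since $\|[A,P]\|_2\to0$ by \eqref{ap1}--\eqref{ap2}, the induction closes. Part $(ii)$ is then immediate: $(H_j^0)^kP_N^0$ is the adjoint of $P_N^0(H_j^0)^k$, the operator $(P_N^0H_j^0P_N^0)^k$ is self-adjoint, and $\|\cdot\|_2$ is adjoint-invariant, so $(ii)$ equals $(i)$ (and likewise for $H_j$).

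For the powers needed in $(iii)$ and $(iv)$ I would first record the telescoping consequence of the compression bound: since $\|P_NH_jP_N-H_j^{(N)}\|_2\to0$ and all operators are uniformly bounded, $\|(P_NH_jP_N)^k-(H_j^{(N)})^k\|_2\le kM^{k-1}\|P_NH_jP_N-H_j^{(N)}\|_2\to0$, and symmetrically for the $0$-versions via \eqref{ap1}. Combined with $(i)$--$(ii)$, both $(H_j^{(N)})^k$ and $(P_NH_jP_N)^k$ are $\mathcal{B}_2$-close to $P_N(H_j)^k$ and to $(H_j)^kP_N$. For $(iii)$ one reduces $P_N^0[(P_N^0H_j^0P_N^0)^k-(P_NH_jP_N)^k]P_N$ to $P_N^0[(H_j^0)^k-(H_j)^k]P_N$ up to $\mathcal{B}_2$-null terms, and then uses $(H_j)^k-(H_j^0)^k=\sum_{i=0}^{k-1}(H_j)^iV_j(H_j^0)^{k-1-i}\in\mathcal{B}_2(\mathcal{H})$, whose $\|\cdot\|_2$-norm is at most $kM^{k-1}\|V_j\|_2$ independently of $N$; sandwiching by projections only decreases the norm, yielding the asserted uniform bound.

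Part $(v)$ I would prove directly from \eqref{ap1}--\eqref{ap2}. Since $V_j=H_j-H_j^0$, it suffices to show $\|P_N(H_j^{(N)}-H_j)P_N^0\|_2\to0$ and $\|P_N(H_j^{0(N)}-H_j^0)P_N^0\|_2\to0$. For the first, insert $P_NH_jP_N$: the term $P_N(H_j^{(N)}-P_NH_jP_N)P_N^0$ is controlled by the compression bound \eqref{ap2}, while $P_N(P_NH_jP_N-H_j)P_N^0=-P_N[H_j,P_N]P_N^0$ is controlled by the commutator bound \eqref{ap2}. The $0$-version is symmetric, inserting $P_N^0H_j^0P_N^0$ and using $P_N(P_N^0H_j^0P_N^0-H_j^0)P_N^0=-P_N(I-P_N^0)[H_j^0,P_N^0]$ together with \eqref{ap1}.

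The main obstacle is $(iv)$, where the two distinct projections $P_N,P_N^0$ and the two matched powers $k,l$ interact. I would write the finite-dimensional trace as $\textup{Tr}(U_N'V_N')$ with $U_N'=P_N^0[(H_2^{0(N)})^k-(H_2^{(N)})^k]P_N$ and $V_N'=P_N[(H_1^{(N)})^l-(H_1^{0(N)})^l]P_N^0$, and the continuum trace as $\textup{Tr}(U_NV_N)$ obtained by deleting the $(N)$-superscripts inside the brackets, so that $\textup{Tr}(U_NV_N)-\textup{Tr}(U_N'V_N')=\textup{Tr}((U_N-U_N')V_N)+\textup{Tr}(U_N'(V_N-V_N'))$ is bounded by $\|U_N-U_N'\|_2\|V_N\|_2+\|U_N'\|_2\|V_N-V_N'\|_2$. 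The delicate step is $\|U_N-U_N'\|_2\to0$: here one must move the single surviving $P_N$ (resp. $P_N^0$) past the $k$-th powers by using that $(H_2^{(N)})^k$ is supported on $P_N\mathcal{H}$ and $(H_2^{0(N)})^k$ on $P_N^0\mathcal{H}$ (Theorem \ref{th: WBG}$(i)$), after which the power estimates of the second paragraph together with $(i)$--$(ii)$ force each piece to $0$; the same argument gives $\|V_N-V_N'\|_2\to0$. Uniform boundedness of $\|V_N\|_2$ and $\|U_N'\|_2$ comes from $(iii)$. Once the two $\mathcal{B}_2$-differences tend to $0$, the two traces share the same limit, which is exactly $(iv)$.
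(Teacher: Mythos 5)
Your proposal is correct and follows essentially the same route as the paper: telescoping/induction with the commutator and compression estimates \eqref{ap1}--\eqref{ap2} for $(i)$--$(ii)$, reduction to $(H_j)^k-(H_j^0)^k\in\mathcal{B}_2(\mathcal{H})$ for $(iii)$, the Cauchy--Schwarz trace bound $|\textup{Tr}(AB)|\le\|A\|_2\|B\|_2$ together with $\mathcal{B}_2$-replacement of the operators by their finite-dimensional approximants for $(iv)$, and insertion of compressions plus commutator terms for $(v)$. The only differences are organizational (your induction in $(i)$ versus the paper's explicit telescoping sum, and your one-step comparison in $(iv)$ versus the paper's two-stage passage through the compressions $(P_N H_j P_N)^k$ and $(P_N^0 H_j^0 P_N^0)^k$), and these are immaterial.
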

\begin{proof}
 $(i)$ Note that 
 \begin{equation*}
 \begin{split}
   & P_N^0\left(H_j^0\right)^k-\left(P_N^0H_j^0P_N^0\right)^k 
   = P_N^0\left(H_j^0\right)^kP_N^0 -\left(P_N^0H_j^0P_N^0\right)^k + 
   P_N^0\left(H_j^0\right)^k\left(P_N^0\right)^{\bot}\\
   & =-P_N^0\sum_{l=0}^{k-1} \left(P_N^0H_j^0P_N^0\right)^{k-l-1}\left[
   P_N^0H_j^0P_N^0 - H_j^0\right]\left(H_j^0\right)^lP_N^0 + 
   P_N^0\left(H_j^0\right)^k\left(P_N^0\right)^{\bot}\\
   &= - P_N^0\sum_{l=0}^{k-1} \left(P_N^0H_j^0P_N^0\right)^{k-l-1}
   P_N^0\left[H_j^0,P_N^0\right]\left(H_j^0\right)^lP_N^0 +
   P_N^0\left[P_N^0, \left(H_j^0\right)^k\right]
 \end{split}
\end{equation*}
and therefore
\begin{equation*}
\begin{split}
 \hspace{-5cm} \left\|P_N^0\left(H_j^0\right)^k-\left(P_N^0H_j^0P_N^0\right)^k\right\|_2
 \leq k\left\|H_j^0\right\|^{k-1}\left\|\left[H_j^0,P_N^0\right]\right\|_2
 + \left\|\left[P_N^0, \left(H_j^0\right)^k\right]\right\|_2\\
 & \hspace{-8cm} \longrightarrow 0~~\text{as}~~N\longrightarrow \infty,
 \end{split}
\end{equation*}
by using \eqref{ap1} for $j=1,2$. By similar computations as above and using the equation \eqref{ap2}
we achieve the other conclusion of $(i)$.
\vspace{0.1in}

$(ii)$ The proof of $(ii)$ follows from $(i)$ by taking the adjoint of the 
expressions involved.
\vspace{0.1in}

$(iii)$ By $(i)$ and $(ii)$ we conclude that 
\begin{equation*}
 \left\|P_N^0\{\left[ \left(P_N^0H_j^0P_N^0\right)^k - 
 \left(P_NH_jP_N\right)^k\right] - \left[\left(H_j^0\right)^k - 
 \left(H_j\right)^k\right]\}P_N\right\|_2\longrightarrow 0 
\end{equation*}
as $N\longrightarrow \infty$ and this completes the proof of $(iii)$
since $\left(H_j^0\right)^k - \left(H_j\right)^k\in \mathcal{B}_2(\mathcal{H})$
for $j=1,2$ and $k\geq 1$.
\vspace{0.1in}

$(iv)$ By $(i)$ and $(ii)$ we have for $k, l\geq 1$ that
\begin{equation*}
\begin{split}
 &\lim\limits_{N\longrightarrow \infty} \textup{Tr}\{P_N^0\left[\left(H_2^0\right)^k-
 \left(H_2\right)^k\right]P_N\left[\left(H_1\right)^l-\left(H_1^0\right)^l\right]P_N^0\} \\
 & = \lim\limits_{N\longrightarrow \infty} \textup{Tr}\{P_N^0\left[\left(P_N^0H_2^0P_N^0\right)^k-
 \left(P_NH_2P_N\right)^k\right]P_N\left[\left(P_NH_1P_N\right)^l-\left(P_N^0H_1^0P_N^0\right)^l
 \right]P_N^0\}.
\end{split}
\end{equation*}
The conclusion of $(iv)$ follows from the above equality and using 
the equations \eqref{ap1} and \eqref{ap2}.
\vspace{0.1in}

$(v)$ First note that
\begin{equation*}
\begin{split}
 & P_N\left(H_j^{(N)}-H_j^{0(N)}\right)P_N^0
 - P_N\left(H_j-H_j^0\right) P_N^0 \\
 & = P_N\left(P_NH_j^{(N)} - P_NH_jP_N\right)P_N^0 
 - P_N\left[P_N,H_j\right]P_N^0 - P_N\left(H_j^{0(N)}P_N^0 - P_N^0H_j^0P_N^0\right)\\
 &\hspace{4cm}  + P_N\left[H_j^0,P_N^0\right]P_N^0
\end{split}
\end{equation*}
and therefore 
\begin{equation*}
\begin{split}
 & \left\|P_N\left(H_j^{(N)}-H_j^{0(N)}\right)P_N^0
 - P_N\left(H_j-H_j^0\right) P_N^0 \right\|_2\\
 & \leq \left\|P_NH_j^{(N)} - P_NH_jP_N\right\|_2
 + \left\|\left[P_N,H_j\right]\right\|_2 + \left\|H_j^{0(N)}P_N^0 - P_N^0H_j^0P_N^0\right\|_2
 +\left\|\left[H_j^0,P_N^0\right]\right\|_2,
\end{split}
\end{equation*}
which converges to $0$ as $N\longrightarrow \infty$ for $j=1,2$ by using
the equations \eqref{ap1} and \eqref{ap2}.
\end{proof}

\textbf{Proof of Theorem \ref{appth1}}:
Note that 
\begin{equation*}
\begin{split}
 &\int_{H_1^0}^{H_1} \left[p_1(x,H_2^0) - p_1(x,H_2)\right] dx \\
 & = \sum\limits_{2\leq i+j\leq n ~;~ i,j \in \mathbb{N}} \frac{c(i,j)}{i+1}
\left[(H_2^0)^j-(H_2)^j\right]\left[(H_1)^{i+1}-(H_1^0)^{i+1}\right],
\end{split}
\end{equation*}
which is trace class by hypotheses. Similarly,
\begin{equation*}
\begin{split}
 & \int_{H_2^0}^{H_2}\left[p_2(H_1,y) - p_2(H_1^0,y) \right]dy
  \\
 & = \sum\limits_{2\leq r+s\leq m~;~r,s \in\mathbb{N}} \frac{d(r,s)}{s+1}\left[(H_1)^r-(H_1^0)^r\right]
\left[(H_2)^{s+1}-(H_2^0)^{s+1}\right] \in \mathcal{B}_1(\mathcal{H}).
\end{split}
\end{equation*}
On the other hand, in the right hand side of \eqref{fappthm},
\begin{equation*}
\begin{split}
& \hspace{0cm} \int_{H_1^{0(N)}}^{H_1^{(N)}} 
P_N^0\left[p_1\left(x,H_2^{0(N)}\right) - p_1\left(x,H_2^{(N)}\right)\right]P_N dx \\
& \hspace{0cm} = \sum \limits_{2\leq i+j\leq n~;~i,j\in \mathbb{N}} \frac{c(i,j)}{i+1}
P_N^0\left[\left(H_2^{0(N)}\right)^j-\left(H_2^{(N)}\right)^j\right]P_N
\left[\left(H_1^{(N)}\right)^{i+1} - \left(H_1^{0(N)}\right)^{i+1}\right]
\end{split}
\end{equation*}
and 
\begin{equation*}
\begin{split}
& \hspace{0cm} = \int_{H_2^{0(N)}}^{H_2^{(N)}} P_N
\left[p_2\left(H_1^{(N)},y\right)- p_2\left(H_1^{0(N)},y\right)\right]P_N^0dy\\
& \hspace{0cm} = \sum \limits_{2\leq r+s\leq m~;~r,s\in \mathbb{N}} \frac{d(r,s)}{s+1}
P_N\left[\left(H_1^{(N)} \right)^r-\left( H_1^{0(N)}\right)^r\right]P_N^0
\left[\left(H_2^{(N)}\right)^{s+1} - \left(H_2^{0(N)}\right)^{s+1}\right].
\end{split} 
\end{equation*}
In order to establish the first equality in \eqref{fappthm}, it is enough
to note that by Lemma \ref{plmmaappthm} $(iv)$ for $i,j\in \mathbb{N}$,
\begin{equation}\label{lastap}
\begin{split}
&\hspace{0cm} \textup{Tr}\{\left[(H_2^0)^j-(H_2)^j\right]
 \left[(H_1)^{i}-(H_1^0)^{i}\right]\}\\
 & = \lim_{N\longrightarrow \infty} \textup{Tr}\{P_N^0
\left[(H_2^0)^j-(H_2)^j\right]P_N
\left[(H_1)^{i}-(H_1^0)^{i}\right]P_N^0\}\\
& = \lim_{N\longrightarrow \infty} \textup{Tr}\{P_N^0
\left[\left(H_2^{0(N)}\right)^j-\left(H_2^{(N)}\right)^j\right]P_N
\left[\left(H_1^{(N)}\right)^{i} 
- \left(H_1^{0(N)}\right)^{i}\right]P_N^0\}.
\end{split}
\end{equation}
The last equality in \eqref{fappthm} follows
immediately by applying Theorem \ref{mainsec2} for the operator tuples 
$\left(H_1^{0(N)},H_2^{0(N)}\right)$ and $\left(H_1^{(N)},H_2^{(N)}\right)$
in the reducing subspaces $P_N^0\mathcal{H}$ and $P_N\mathcal{H}$ respectively.
This completes the proof.~~~~~~~~~~~~~~~~~~~~~~~~~~~~~~~$\square$

\section{Trace Formula for two variables in Infinite Dimension}\label{sec: fourth}
In this section we prove our main result namely Stokes-like formula under trace for
pairs of commuting bounded self-adjoint operators.

\begin{thm}\label{mainthm}
Let $\underline{H}^0 = \left(H_1^0,H_2^0\right)$ and 
$\underline{H} = \left(H_1,H_2\right)$ be two commuting tuples of bounded 
self-adjoint operators in a separable Hilbert
space $\mathcal{H}$ such that $H_j-H_j^0 \equiv V_j\in \mathcal{B}_2(\mathcal{H})$ for $j=1,2$.
Then there exists a unique  complex Borel measure $\mu$ on $ [a,b]^2$ such that
\begin{equation*}
\begin{split}
\textup{Tr} \{\int_{H_1^0}^{H_1} p_1(x,H_2^0) dx + \int_{H_2^0}^{H_2} p_2(H_1,y)dy + \int_{H_1}^{H_1^0} p_1(x,H_2)dx
+ \int_{H_2}^{H_2^0} p_2(H_1^0,y)dy\}\\
& \hspace{-13cm} = \textup{Tr} \{\int_{H_1^0}^{H_1} \left[p_1(x,H_2^0) - p_1(x,H_2)\right]dx
+ \int_{H_2^0}^{H_2} \left[p_2(H_1,y) - p_2(H_1^0,y)\right]dy\}\\
& \hspace{-13cm} = \int_{[a,b]^2} \left[\frac{\partial p_2}{\partial x}(x,y) - 
\frac{\partial p_1}{\partial y}(x,y)\right] \mu(dx\times dy),
\end{split}
\end{equation*}
where $p_1$ and $p_2$ are two polynomials in $[a,b]^2$ i.e. 
$p_1(x,y)$ $ = \sum \limits_{0\leq i+j\leq n} c(i,j) x^iy^j$ and
$p_2(x,y)$ $ = \sum \limits_{0\leq r+s\leq m} d(r,s) x^ry^s$ with complex coefficients and 
 $\bigcup \limits_{j=1}^2\{\sigma(H_j) \bigcup \sigma(H_j^0)\} \subseteq [a,b].$
\end{thm}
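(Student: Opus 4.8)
The plan is to reduce to finite dimensions via Theorem~\ref{appth1} and then extract $\mu$ by a weak-$*$ compactness argument, with the divided-difference identity of Theorem~\ref{dividthm} supplying the uniform bound that compactness needs. First I would settle the initial equality: since the spectral integral $\int_A^B(\cdot)\,dx$ changes sign under $A\leftrightarrow B$, the first and third Stokes terms combine to $\int_{H_1^0}^{H_1}\left[p_1(x,H_2^0)-p_1(x,H_2)\right]dx$ and the second and fourth to $\int_{H_2^0}^{H_2}\left[p_2(H_1,y)-p_2(H_1^0,y)\right]dy$; exactly as in the proof of Theorem~\ref{appth1}, these two integrands are genuinely trace class because $p_1(x,H_2^0)-p_1(x,H_2)$ and $p_2(H_1,y)-p_2(H_1^0,y)$ each carry a factor $V_2,V_1\in\mathcal{B}_2(\mathcal{H})$, so the trace on the left is well defined and equals this regrouped expression.

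Writing $\psi=\partial_x p_2-\partial_y p_1\in L^\infty([a,b]^2)$, Theorem~\ref{appth1} identifies the trace with $\lim_N\mathcal{I}_N(\psi)$, where $\mathcal{I}_N(\psi)=\int_a^b\int_a^b\psi\,\xi_N\,dx\,dy$. By Theorem~\ref{mainsec2} applied to the finite-dimensional tuples, $\psi\mapsto\mathcal{I}_N(\psi)$ is defined for every $\psi\in L^\infty$, so $\mathcal{I}_N(\psi)=\int_{[a,b]^2}\psi\,d\mu_N$ with $\mu_N=\xi_N\,dx\,dy$ a complex Borel measure. The decisive point is a uniform bound $\|\mu_N\|\le C$ on total variations. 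Here I would use the divided-difference form of Theorem~\ref{dividthm}: the kernel $\frac{1}{(x_1-x_2)(y_1-y_2)}\int_{x_2}^{x_1}\int_{y_2}^{y_1}\psi$ is an average of $\psi$, hence bounded by $\|\psi\|_\infty$, and the weighted double spectral integral against $E_{\underline{H}^0}(\cdot)\,V_2\,E_{\underline{H}}(\cdot)$ is a Birman--Solomyak double operator integral \cite{birsolo1,birsolo2}, bounded on $\mathcal{B}_2(\mathcal{H})$ by the sup-norm of its symbol. Pairing the result with $V_1$ by Cauchy--Schwarz in $\mathcal{B}_2(\mathcal{H})$ yields $|\mathcal{I}_N(\psi)|\le\|V_1^{(N)}\|_2\,\|V_2^{(N)}\|_2\,\|\psi\|_\infty$, and since Lemma~\ref{plmmaappthm}$(v)$ keeps the finite-dimensional Hilbert--Schmidt norms bounded (they approach the quantities controlled by $\|V_j\|_2$), one gets $\|\mu_N\|\le C$ independent of $N$.

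Consequently $\{\mu_N\}$ sits in a norm-bounded ball of $C([a,b]^2)^*$, so by Banach--Alaoglu there is a weak-$*$ convergent subsequence $\mu_{N_k}\rightharpoonup\mu$. For the continuous symbol $\psi$ we get $\int\psi\,d\mu_{N_k}\to\int\psi\,d\mu$, while $\int\psi\,d\mu_{N_k}=\mathcal{I}_{N_k}(\psi)\to\textup{Tr}\{\cdots\}$ by Theorem~\ref{appth1}; this proves existence. For uniqueness, note that the symbols $\partial_x p_2-\partial_y p_1$ already exhaust all polynomials (take $p_1=0$ and $p_2=x^{r+1}y^s/(r+1)$ to obtain $x^ry^s$), which are dense in $C([a,b]^2)$ by Stone--Weierstrass; since a finite complex Borel measure on the compact set $[a,b]^2$ is determined by its integrals against $C([a,b]^2)$, the measure $\mu$ is unique, and in fact the whole sequence $\mu_N$ converges to it.

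I expect the uniform total-variation bound to be the main obstacle. The naive total variation of the underlying four-variable spectral measure $\langle V_1,PE_{\underline{H}^0}(\cdot)V_2E_{\underline{H}}(\cdot)Q\rangle_2$ diverges as the partition refines, so crude summation fails; one must genuinely exploit the $\mathcal{B}_2$-boundedness of double operator integrals, and in addition carefully track that the finite-dimensional differences $V_j^{(N)}$ remain bounded in Hilbert--Schmidt norm as $N\to\infty$, which is where Lemma~\ref{plmmaappthm}$(v)$ enters.
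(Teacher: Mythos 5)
Your proposal is correct and follows essentially the same route as the paper's proof: reduction to finite dimensions via Theorem~\ref{appth1}, the uniform total-variation bound on $\mu_N=\xi_N\,dx\,dy$ obtained from the divided-difference formula of Theorem~\ref{dividthm} together with the uniform Hilbert--Schmidt bound of Lemma~\ref{plmmaappthm}$(v)$, and a weak-$*$ compactness argument (the paper invokes Helly's theorem with separability of $C([a,b]^2)$, which is the sequential form of your Banach--Alaoglu step) to extract $\mu$. Your uniqueness argument --- that the symbols $\partial_x p_2-\partial_y p_1$ include all monomials $x^ry^s$ and hence are dense in $C([a,b]^2)$ by Stone--Weierstrass --- is in fact spelled out more explicitly than in the paper, which leaves that step implicit.
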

\begin{proof}
By Theorem \ref{appth1} corresponding to the tuples $\underline{H}^0$, $\underline{H}$, we conclude that
\begin{equation}\label{eqreduceq}
\begin{split}
\hspace{-0.5cm} \textup{Tr} \{\int_{H_1^0}^{H_1} p_1(x,H_2^0) dx + \int_{H_2^0}^{H_2} p_2(H_1,y)dy \\
&\hspace{-3cm} + \int_{H_1}^{H_1^0} p_1(x,H_2)dx
+ \int_{H_2}^{H_2^0} p_2(H_1^0,y)dy\}\\
& \hspace{-6.5cm} = \lim_{N\longrightarrow \infty}
\int\limits_a^b \int\limits_a^b \left[\frac{\partial p_2}{\partial x}(x,y) -
\frac{\partial p_1}{\partial y}(x,y)\right]\xi_N(x,y) dx dy,
\end{split}
\end{equation}
where $\xi_N(x,y) = \textup{Tr}\{P_N\left[E_{H_1^{(N)}}(x)  - E_{H_1^{0(N)}}(x)\right]
P_N^0\left[E_{H_2^{(N)}}(y) - E_{H_2^{0(N)}}(y)\right]P_N\}$.

For a Borel subset $\Delta$ of $[a,b]^2$, set
$$ \mu_N(\Delta) = \int \limits_{\Delta} \xi_N(x,y) dx dy$$
and observe that $\|\xi_N\|_{\infty}\leq 4 \|P_N\|_2\|P_N^0\|_2$ and therefore
$\mu_N$ is a complex Borel measure on $[a,b]^2$.
Next we want to show that there exists a complex Borel measure $\mu$ on $[a,b]^2$ such that 
for a suitable subsequence $\{N_k\}$, $\mu_{N_k}$ converges
weakly to $\mu$ i.e.
$$\lim_{k\longrightarrow \infty} \int_{[a,b]\times [a,b]} \psi(x,y) \mu_{N_k}(dx\times dy) = 
\int_{[a,b]\times [a,b]} \psi(x,y) \mu(dx\times dy)$$
for all $\psi(x,y) \in C([a,b]^2)$.
Let 
$\psi(x,y) \in C([a,b]^2)$ and let $\phi_j$($j=1,2$) be given as in \eqref{phipsieq}.
Then by applying Theorem \ref{dividthm}, for the pairs 
$\left(H_1^{(N)},H_2^{(N)}\right)$ and $\left(H_1^{0(N)},H_2^{0(N)}\right)$, we have that

\begin{equation}\label{lf1}
\begin{split}
\mathcal{I}_N = \textup{Tr}\{\int_{H_1^{0(N)}}^{H_1^{(N)}} P_N^0\phi_1\left(x,H_2^{0(N)}\right)P_N dx + 
\int_{H_2^{0(N)}}^{H_2^{(N)}} P_N^0\phi_2\left(H_1^{(N)},y\right)P_Ndy \\
& \hspace{-9cm} + \int_{H_1^{(N)}}^{H_1^{0(N)}} P_N^0 \phi_1\left(x,H_2^{(N)}\right)P_N dx
+ \int_{H_2^{(N)}}^{H_2^{0(N)}}P_N^0 \phi_2\left(H_1^{0(N)},y\right)P_Ndy\} \\
& \hspace{-12.5cm} =  \int \limits_{[a,b]^2} \int \limits_{[a,b]^2} \frac{\int \limits_{x_2}^{x_1}
\int \limits_{y_2}^{y_1} 
\psi(x,y)dx dy}{(x_1-x_2)(y_1-y_2)}\\
& \hspace{-11.5cm} \left\langle P_N^0\left(H_1^{(N)}-H_1^{0(N)}\right)P_N, 
P_N^0E_{H_1^{0(N)}}(dx_2) E_{H_2^{0(N)}}(dy_1)
 \left(H_2^{(N)}-H_2^{0(N)}\right) E_{H_1^{(N)}}(dx_1) E_{H_2^{(N)}}(dy_2)P_N\right\rangle_2,
\end{split}
\end{equation}
Next we recall from Lemma \ref{plmmaappthm} $(v)$ that 
$\sup\limits_N \left\|P_N\left(H_j^{(N)}-H_j^{0(N)}\right)P_N^0\right\|_2< C_j<\infty$
for $j=1,2$. Thus by the property of a spectral measure in a Hilbert space, one has that
\begin{equation*}
\begin{split}
&\left\|\left\langle P_N^0\left(H_1^{(N)}-H_1^{0(N)}\right)P_N, 
P_N^0E_{H_1^{0(N)}}(dx_2) E_{H_2^{0(N)}}(dy_1)
 \left(H_2^{(N)}-H_2^{0(N)}\right) E_{H_1^{(N)}}(dx_1) 
 E_{H_2^{(N)}}(dy_2)P_N\right\rangle_2  \right\|_{var}\\
 & \leq \left\|P_N^0\left(H_1^{(N)}-H_1^{0(N)}\right)P_N\right\|_2
 \left\| P_N^0\left(H_2^{(N)}-H_2^{0(N)}\right)P_N\right\|_2
 < C_1C_2.
\end{split}
\end{equation*}
On the other hand, by Theorem \ref{mainsec2}, we conclude that
\begin{equation}\label{lf2}
\begin{split}
\mathcal{I}_N = \int \limits_a^b \int \limits_a^b \left[\frac{\partial \phi_2}{\partial x}(x,y) -
\frac{\partial \phi_1}{\partial y}(x,y)\right]\xi_N(x,y) dx dy
= \int \limits_{[a,b]^2} \psi(x,y) \mu_N(dx\times dy),
\end{split}
\end{equation}
leading to the equality:
\begin{equation*}
\begin{split}
\int \limits_{[a,b]^2} \psi(x,y) \mu_N(dx\times dy) \\
& \hspace{-4.3cm} = \int \limits_{[a,b]^2} \int \limits_{[a,b]^2} \frac{\int \limits_{x_2}^{x_1}
\int \limits_{y_2}^{y_1} 
\psi(x,y)dx dy}{(x_1-x_2)(y_1-y_2)}\\
& \hspace{-4cm} \left\langle P_N^0\left(H_1^{(N)}-H_1^{0(N)}\right)P_N, 
P_N^0E_{H_1^{0(N)}}(dx_2) E_{H_2^{0(N)}}(dy_1)
 \left(H_2^{(N)}-H_2^{0(N)}\right) E_{H_1^{(N)}}(dx_1) E_{H_2^{(N)}}(dy_2)P_N\right\rangle_2,
\end{split}
\end{equation*}
for all $\psi(x,y) \in C([a,b]^2).$ Thus
\begin{equation}\label{absphi}
 \left|\int \limits_{[a,b]^2} \psi(x,y) \mu_N(dx\times dy)\right|
 < C_1C_2 \|\psi\|_{\infty}.
\end{equation}
Since, by the Riesz's theorem (page 251, \cite{royden}),
$C([a,b]^2)$ is separable in sup-norm, 
one can apply Helley's theorem (page 171, \cite{royden}) to conclude that 
there exists a subsequence $\mu_{_{N_k}}$ of
$\mu_{_{N}}$ such that $\mu_{_{N_k}}$ converges weakly to a unique complex Borel measure $\mu$ on $[a,b]^2$. i.e.
\begin{equation}\label{mscon}
 \lim \limits_{k\longrightarrow \infty} \int \limits_{[a,b]^2} \psi(x,y) \mu_{_{N_k}}(dx\times dy) =
 \int \limits_{[a,b]^2} \psi(x,y) 
 \mu(dx\times dy)~\forall ~\psi \in C([a,b]^2).
\end{equation}
This completes the proof, by applying this conclusion to the right hand side of the equation 
\eqref{fappthm}.

\end{proof}

\noindent\textbf{Acknowledgment:}  The second author acknowledges a conversation (nearly two decades old) with Voiculescu, which 
inspired this work, the primary aim of which is to obtain a two variable spectral shift 
function $\xi$ (like $\xi_N$ in Theorem \ref{mainsec2} for finite dimensions)  instead of the associated 
measure $\mu$ in Theorem \ref{mainthm}.

The first author is
grateful to Jawaharlal Nehru Centre for Advanced Scientific Research
and Indian Statistical Institute, Bangalore Centre for warm
hospitality and also thanks Department of Atomic Energy,
India through N.B.H.M Post Doctoral Fellowship for financial support.
The second author thanks Jawaharlal Nehru Centre for Advanced Scientific Research for support.

\end{document}